\newdimen\plusheight
\def\+{\;\lower\plusheight\hbox{$+$}\;}
\newdimen\minusheight
\def\-{\;\lower\minusheight\hbox{$-$}\;}
\newdimen\cdotsheight
\def\cds{\lower\cdotsheight\hbox{$\cdots$}}
\renewcommand{\(}{\left\(}
\renewcommand{\)}{\right\)}
\renewcommand{\[}{\left[}
 \theoremstyle{plain}
\newtheorem{theorem}{Theorem}[section]
\newtheorem{lemma}[theorem]{Lemma}
\newtheorem{corollary}[theorem]{Corollary}
\newtheorem{definition}{Definition}
\newtheorem{proposition}[theorem]{Proposition}
\newtheorem{remark}[theorem]{Remark}
\newenvironment{pf}
   {\vskip 0.15in \par\noindent{\bf Proof of Theorem}\hskip 0.5em\ignorespaces}
   {\hfill $\Box$\par\medskip}
\begin{document}
\title[Certain values of Gaussian hypergeometric series and a family of algebraic curves]
{Certain values of Gaussian hypergeometric series and a family of algebraic curves}
\author{Rupam Barman}
\address{Department of Mathematical Sciences, Tezpur University, Napaam-784028, Sonitpur, Assam, INDIA}
\email{rupamb@tezu.ernet.in}
\author{Gautam Kalita}
\address{Department of Mathematical Sciences, Tezpur University, Napaam-784028, Sonitpur, Assam, INDIA}
\email{gautamk@tezu.ernet.in}
\vspace*{0.7in}
\begin{center}

{\bf CERTAIN VALUES OF GAUSSIAN HYPERGEOMETRIC SERIES AND A FAMILY OF ALGEBRAIC CURVES}\\[5mm]

Rupam Barman and Gautam Kalita\\[.2cm]
\end{center}
\vspace{.51cm}
\noindent\textbf{Abstract:} Let $\lambda \in \mathbb{Q}\setminus \{0, -1\}$ and $l \geq 2$. Denote by
$C_{l,\lambda}$ the nonsingular projective algebraic curve over $\mathbb{Q}$ with affine equation given by
$$y^l=(x-1)(x^2+\lambda).$$
In this paper we give a relation between the number of points on $C_{l, \lambda}$
over a finite field and Gaussian hypergeometric series. We also give an alternate proof of a result of \cite{mccarthy}. 
We find some special values of ${_{3}}F_2$ and ${_{2}}F_1$ Gaussian hypergeometric series.
Finally we evaluate the value of ${_{3}}F_2(4)$ which extends a result of \cite{ono}.

\noindent{\footnotesize \textbf{Key Words}: algebraic curves; Gaussian hypergeometric series.}

\noindent{\footnotesize 2010 Mathematics Classification Numbers: 11G20, 33C20}

\section{\bf Introduction and statement of results}\label{secone}

In \cite{greene}, Greene introduced the notion of hypergeometric functions over finite fields or
\emph{Gaussian hypergeometric series} which are analogous to the classical hypergeometric series.
Gaussian hypergeometric series possess many interesting properties analogous to the classical hypergeometric series.
Finding the number of solutions of a polynomial equation over a finite field has been of interest to
mathematicians for many years. Many mathematicians have studied this problem and found interesting relations to Gaussian
hypergeometric series. For example see \cite{BK, BK2, Fuselier, koike, Lennon, mccarthy, ono, rouse, vega}.
\par
We will now restate some definitions from \cite{greene}. Let $q=p^e$ be a power of an odd prime and $\mathbb{F}_q$
the finite field of $q$ elements. Throughout this paper, $A, B, C, S, \chi, \phi, \varepsilon$ will denote complex multiplicative characters on $\mathbb{F}_q^{\times}$. 
The notation $\varepsilon, \phi$ will always be reserved
for the trivial and quadratic characters, respectively. Extend each character $\chi \in \widehat{\mathbb{F}_q^{\times}}$ to all of $\mathbb{F}_q$ by setting $\chi(0):=0$. The binomial coefficient
${A \choose B}$ is defined by
\begin{align}\label{eq0}
{A \choose B}:=\frac{B(-1)}{q}J(A,\overline{B})=\frac{B(-1)}{q}\sum_{x \in \mathbb{F}_q}A(x)\overline{B}(1-x),
\end{align}
where $J(A, B)$ denotes the usual Jacobi sum and $\overline{B}$ is the inverse of $B$.
The following special case is known from \cite{greene}
\begin{align}\label{eq01}
{A \choose \varepsilon}={A \choose A}=-\frac{1}{q}+\frac{q-1}{q}\delta(A),
\end{align}
where $\delta(A)=0$ if $A\neq\varepsilon$ and $\delta(A)=1$ if $A=\varepsilon$. With this notation, for characters $A_0, A_1,\ldots, A_n$ and $B_1, B_2,\ldots, B_n$ of $\mathbb{F}_q$,
the Gaussian hypergeometric series ${_{n+1}}F_n\left(\begin{array}{cccc}
                A_0, & A_1, & \cdots, & A_n\\
                 & B_1, & \cdots, & B_n
              \end{array}\mid x \right)$ over $\mathbb{F}_q$ is defined as
\begin{align}\label{eq00}
{_{n+1}}F_n\left(\begin{array}{cccc}
                A_0, & A_1, & \cdots, & A_n\\
                 & B_1, & \cdots, & B_n
              \end{array}\mid x \right):
              =\frac{q}{q-1}\sum_{\chi}{A_0\chi \choose \chi}{A_1\chi \choose B_1\chi}
              \cdots {A_n\chi \choose B_n\chi}\chi(x),
\end{align}
where the sum is over all characters $\chi$ of $\mathbb{F}_q$.
\par
Let $\lambda \in \mathbb{Q}\setminus \{0, -1\}$ and $l \geq 2$. Denote by
$C_{l,\lambda}$ the nonsingular projective algebraic curve over $\mathbb{Q}$ with affine equation given by
\begin{align}\label{eq1}
y^l=(x-1)(x^2+\lambda).
\end{align}
\begin{definition}
Suppose $p$ is a prime of good reduction for $C_{l, \lambda}$. Let $q=p^e$.
Define the integer $a_q(C_{l, \lambda})$ by
\begin{align}\label{eq2}
a_q(C_{l, \lambda}):=1+q-\#C_{l, \lambda}(\mathbb{F}_q),
\end{align}
where $\#C_{l, \lambda}(\mathbb{F}_q)$ denotes the number of points that the curve $C_{l, \lambda}$ has over
$\mathbb{F}_q$.
\end{definition}
It is clear that a prime $p$ not dividing $l$ is of good reduction for $C_{l, \lambda}$ if and only if
$\text{ord}_p(\lambda(\lambda+1))=0.$
\par
We now state a remark about the number of $\mathbb{F}_q$-points on $C_{l, \lambda}$.
For details, see \cite[Remark 1.1]{BK}.
\begin{remark}
Let $l \neq 3$. Then
\begin{align}\label{eq11}
\#C_{l, \lambda}(\mathbb{F}_q)=1+\#\{(x, y)\in \mathbb{F}_q^2: y^l=(x-1)(x^2+\lambda)\}.
\end{align}
Again, let $l=3$ and $p\equiv 1$ $($\emph{mod} $3)$. Then
\begin{align}\label{eq12}
\#C_{l, \lambda}(\mathbb{F}_q)=3+\#\{(x, y)\in \mathbb{F}_q^2: y^l=(x-1)(x^2+\lambda)\}.
\end{align}
\end{remark}
\begin{remark}\label{remark1}
If $l=3$, $C_{l, \lambda}$ is an elliptic curve. The change of variables $$X-Z\rightarrow X,~Y\rightarrow Y~~\text{and}~~ X\rightarrow X$$ transforms the projective curve $$C_{3, \lambda}: Y^3=(X-Z)(X^2+\lambda Z^2)$$ to
\begin{align}\label{projective-1}
Y^3&=X(X^2+2XZ+(1+\lambda)Z^2).
\end{align}
Now dehomogenizing \eqref{projective-1} by putting $X=1$ and then making the substitution $$Y\rightarrow (1+\lambda)x, Z\rightarrow (1+\lambda)y-\frac{1}{1+\lambda},$$ we find that $C_{3, \lambda}$ is isomorphic over $\mathbb{Q}$ to the elliptic curve
\begin{align}\label{200}
y^2=x^3-\frac{\lambda}{(1+\lambda)^4}.
\end{align}
\end{remark}
Ono \cite[Thm. 5]{ono}, proved that if $\lambda \in \mathbb{Q}\setminus \{0, -1\}$ and $p$ is an odd prime for which
$\text{ord}_p(\lambda(\lambda+1))=0$ then
\begin{align}\label{eq122}
{_{3}}F_2\left(\begin{array}{cccc}
                \phi, & \phi, & \phi\\
                   & \varepsilon, & \varepsilon
              \end{array}\mid \frac{1+\lambda}{\lambda} \right)
              =\frac{\phi(-\lambda)(a_p(C_{2,\lambda})^2-p)}{p^2}.
\end{align}
Note that a change of variables in Theorem 5 of \cite{ono} is required to arrive at \eqref{eq122}. In this paper, we give a proof of the following result which generalizes \eqref{eq122} to the algebraic curve $C_{l, \lambda}$ over $\mathbb{F}_q$.
\begin{theorem}\label{thm4}
Let $p$ be a prime such that \emph{ord}$_p(\lambda(\lambda+1))=0$ and $q=p^e \equiv 1$ $($\emph{mod} $l)$.
If $l\geq 2$ is such that $3 \nmid l$ or $4\nmid l$, then
\begin{align}
a_q(C_{l, \lambda})^2 &=q^2\cdot \sum_{i=1}^{l-1}\frac{J(S^{3i}, S^{-i})}{S^i(-4\lambda^3)J(S^i, S^i)}\cdot {_{3}}F_2
\left(\begin{array}{cccc}
                S^{3i}, & S^i, & S^{2i}\phi\\
                   & S^{4i}, & S^{2i}
              \end{array}\mid \frac{1+\lambda}{\lambda} \right)\notag\\
              &\hspace{.5cm}+q\cdot \sum_{i=1}^{l-1}
              \frac{\phi(-\lambda)J(S^{3i}, S^{-i})}{S^i(-4\lambda(1+\lambda)^2)J(S^i, S^i)}+Q,\notag
\end{align}
where
\begin{align}
Q=\left\{
    \begin{array}{ll}
      (l-1)(q-1)-(l-3)a_q(C_{l,\lambda}), & \hbox{if $l$ is odd;} \\
      (l-2)(q-1)-(l-2)a_q(C_{l,\lambda})\notag\\-2q\cdot \displaystyle\sum_{i=1}^{\frac{l}{2}-1}\frac{J(\phi,S^{-2i})}{J(S^{i}\phi,S^{-3i})}\cdot{_{2}}F_1\left(\begin{array}{cc}
                S^{3i}, & S^{3i}\phi\\
                   & S^{4i}
              \end{array}\mid 1+\lambda \right), & \hbox{if $l$ is even}
    \end{array}
  \right.
\end{align}
and $S$ is a character on $\mathbb{F}_q$ of order $l$.
\end{theorem}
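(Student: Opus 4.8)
The plan is to convert the point count on $C_{l,\lambda}$ into a character sum, express $a_q(C_{l,\lambda})$ as a sum of Jacobi-sum traces, and then obtain $a_q(C_{l,\lambda})^2$ by squaring and reorganising the resulting double sum into Gaussian hypergeometric series. First I would use the Remark to pass from $\#C_{l,\lambda}(\mathbb{F}_q)$ to the affine count $N:=\#\{(x,y)\in\mathbb{F}_q^2 : y^l=(x-1)(x^2+\lambda)\}$: for $l\neq 3$ equation \eqref{eq11} gives $a_q(C_{l,\lambda})=q-N$, while for $l=3$ equation \eqref{eq12} supplies an explicit additive constant that I would carry along. Writing $f(x)=(x-1)(x^2+\lambda)$ and using that, since $q\equiv 1\pmod{l}$, the number of $l$-th roots of a nonzero $u$ equals $\sum_{i=0}^{l-1}S^i(u)$, the $i=0$ term together with the $f(x)=0$ contributions reassembles to $q$, and I obtain
\begin{align}
a_q(C_{l,\lambda})=-\sum_{i=1}^{l-1}T_i,\qquad T_i:=\sum_{x\in\mathbb{F}_q}S^i(x-1)\,S^i(x^2+\lambda),\notag
\end{align}
up to the explicit constant in the $l=3$ case.

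Next I would evaluate each $T_i$. Shifting $x\mapsto x+1$ turns $x^2+\lambda$ into the quadratic $x^2+2x+(1+\lambda)$, whose discriminant is $-4\lambda$ and whose product of roots is $1+\lambda$; expanding $S^i$ of this quadratic through Gauss sums, so as not to assume that $-\lambda$ is a square in $\mathbb{F}_q$, rewrites $T_i$, after normalising by $J(S^i,S^i)$ and by the character value $S^i$ of the relevant discriminant, as a Jacobi-sum multiple of a ${_2}F_1$. The quadratic character $\phi$ enters here precisely as the symbol distinguishing the two roots $-1\pm\sqrt{-\lambda}$.

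The heart of the proof is the formation of $a_q(C_{l,\lambda})^2=\sum_{i,j}T_iT_j$ and its reduction to a single sum over $i$. The idea is that the product of the two ${_2}F_1$ factors coming from the $T_i$'s symmetrises the ambiguity $\sqrt{-\lambda}$, so that the irrational pieces cancel and a Clausen-type finite-field identity collapses the product into one ${_3}F_2$ with the rational argument $\tfrac{1+\lambda}{\lambda}$ and upper/lower parameters $S^{3i},S^i,S^{2i}\phi$ and $S^{4i},S^{2i}$; this is where the coefficients $\frac{J(S^{3i},S^{-i})}{S^i(-4\lambda^3)J(S^i,S^i)}$ and, for the lower-order piece, $\frac{\phi(-\lambda)J(S^{3i},S^{-i})}{S^i(-4\lambda(1+\lambda)^2)J(S^i,S^i)}$ are produced. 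Throughout, I would track the degenerate indices, namely those $i$ for which one of $S^{3i},S^{4i},S^{2i}$ is trivial, together with the constant arising from \eqref{eq01} and, for $l=3$, the additive constant from \eqref{eq12}; these contribute either pure constants proportional to $(q-1)$ or terms proportional to the single sums $\sum_k T_k=-a_q(C_{l,\lambda})$, and collecting them yields exactly the correction term $Q$. The odd/even dichotomy reflects the special index $i=l/2$ at which $S^{l/2}=\phi$; for even $l$ that index is responsible for the residual ${_2}F_1\!\left(S^{3i},S^{3i}\phi;S^{4i}\mid 1+\lambda\right)$ appearing in $Q$, and it also accounts for the shift from $(l-1)$ to $(l-2)$ in the leading constants.

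The main obstacle I anticipate is this last reduction of the double sum: establishing the exact finite-field hypergeometric transformation that sends the symmetrised product of ${_2}F_1$'s to the stated ${_3}F_2$, uniformly in $\lambda$ and without splitting into cases according to whether $-\lambda$ is a square, while simultaneously keeping precise account of every degenerate Jacobi sum so that the bookkeeping delivers $Q$ in the stated closed form. A useful consistency check is $l=2$: there the single index $i=1$ gives $S=\phi$, the coefficient reduces to $\phi(-\lambda)$, the sum in $Q$ is empty so $Q=0$, and the formula collapses to Ono's relation \eqref{eq122}.
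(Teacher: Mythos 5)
Your opening matches the paper's: both arguments reduce the point count to $-a_q(C_{l,\lambda})=\sum_{i=1}^{l-1}T_i$ with $T_i=\sum_{x\in\mathbb{F}_q}S^i((x-1)(x^2+\lambda))$ (this is \eqref{eq13}), and both extract the ${_3}F_2$ main terms from the diagonal contributions $T_i^2$ by a finite-field Clausen-type evaluation. One difference in the diagonal step is worth noting: the paper never writes an individual $T_i$ as a ${_2}F_1$. It applies Theorem \ref{thm2} directly to $T_i^2=S^i(\lambda^2)\,g\bigl(S^i,S^i;-\tfrac{1}{\lambda}\bigr)^2$, which needs no square roots of characters, whereas your plan to express each $T_i$ as a Jacobi-sum multiple of a ${_2}F_1$ (as in Theorem \ref{thm22}) requires a character $\sqrt{S^{3i}}$ to exist; under the hypotheses of Theorem \ref{thm4} it need not, which is exactly why Theorem \ref{thm22} carries the additional hypothesis that $\frac{q-1}{l}$ be even. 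That part of your outline is repairable: use the paper's route.

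The genuine gap is the off-diagonal step, which you correctly single out as the main obstacle but then settle by assertion. From your (correct) expansion $a_q^2=\sum_{i,j}T_iT_j$, and since by Theorem \ref{thm2} the displayed main terms equal $\sum_iT_i^2$, the stated formula is equivalent to $\sum_{i\neq j}T_iT_j=Q$. You claim these cross products contribute only constants proportional to $(q-1)$, multiples of $a_q$, and (for even $l$) the ${_2}F_1$'s in $Q$; no identity accomplishes this. Each $T_iT_j$ with $i\neq j$ is a genuine double sum $\sum_{t,s}S^i(f(t))S^j(f(s))$, a sum of weight-two Weil numbers of modulus $q$ that depends on $\lambda$ in an essential way; for instance $T_iT_{l-i}=|T_i|^2$, which is not the constant $q-1$. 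The paper's $Q$ does not come from these products at all: its proof replaces $\sum_{i\neq j}T_iT_j$ by the single sums $\sum_{i\neq j}\sum_tS^{i+j}(f(t))$, in effect retaining only the diagonal $t=s$, and it is those single sums that the multiplicity count $\#P(i_k)$ and, for even $l$, Theorems \ref{thm1} and \ref{thm3} convert into $Q$; this replacement is not an identity, so you cannot recover $Q$ from your expansion by any bookkeeping. Indeed the identity your plan needs is false: for $l=5$, $q=11$, $\lambda=1$, the ten nonzero values of $f(x)=(x-1)(x^2+1)$ fall into the five classes of $\mathbb{F}_{11}^{\times}$ modulo fifth powers with multiplicities $(1,2,4,1,2)$, which gives $a_q=5$ and $\sum_iT_i^2=5(1+2\cdot 4+2\cdot 4)-10^2=-15$, hence $\sum_{i\neq j}T_iT_j=25-(-15)=40$, whereas $Q=4(q-1)-2a_q=30$. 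So the reduction cannot be carried out as stated; your gap sits at precisely the step where the published proof itself passes, without justification, from $\bigl(\sum_ic_i\bigr)^2$ to $\sum_ic_i^2+\sum_{i\neq j}\sum_tS^{i+j}(f(t))$.
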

In addition, we will also prove the following results about the number of $\mathbb{F}_q$-points on the curve $C_{l, \lambda}$.
\begin{theorem}\label{thm22}
Suppose that $q=p^e \equiv 1$ $($\emph{mod} $l)$ and \emph{ord}$_p(\lambda(\lambda+1))=0$.
If $3\nmid l$ and $\frac{q-1}{l}$ is even, then
\begin{align}\label{eq114}
-a_q(C_{l, \lambda})=q\cdot \displaystyle\sum_{i=1}^{l-1}\frac{J(\phi, S^{-i})}{J(\sqrt{S^i}, \sqrt{S^{-3i}}\phi)}
\cdot {_{2}}F_1\left(\begin{array}{cccc}
                \sqrt{S^{3i}}\phi, & \sqrt{S^{3i}}\\
                 & S^{2i}
              \end{array}\mid 1+\lambda \right)
\end{align}
and for $l=3$,
\begin{align}\label{eq115}
-a_q(C_{l, \lambda})=2+q\cdot\displaystyle\sum_{i=1}^{2}{_{2}}F_1\left(\begin{array}{cccc}
                \phi, & \varepsilon\\
                 & S^i
              \end{array}\mid 1+\lambda \right),
\end{align}
where $S$ is a character of order $l$ on $\mathbb{F}_q$.
\end{theorem}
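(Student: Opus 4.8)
The plan is to compute $\#C_{l,\lambda}(\mathbb{F}_q)$ by multiplicative characters and to match the outcome to Greene's definition \eqref{eq00}. First I would reduce to an affine count. By \eqref{eq2} together with \eqref{eq11}, for $l\neq 3$ one has $-a_q(C_{l,\lambda})=\#\{(x,y)\in\mathbb{F}_q^2:y^l=(x-1)(x^2+\lambda)\}-q$, while \eqref{eq12} adds a constant $2$ when $l=3$. Since $q\equiv 1\pmod l$, for $v\neq 0$ the number of $y$ with $y^l=v$ is $\sum_{i=0}^{l-1}S^i(v)$ with $S$ of order $l$; summing over $x$ and peeling off the $i=0$ term, the $q$ and the contribution of the zeros of $(x-1)(x^2+\lambda)$ cancel, leaving
$$-a_q(C_{l,\lambda})=\sum_{i=1}^{l-1}T_i,\qquad T_i:=\sum_{x\in\mathbb{F}_q}S^i\big((x-1)(x^2+\lambda)\big)$$
(with an extra summand $2$ when $l=3$). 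Thus everything reduces to evaluating $T_i$.

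To evaluate $T_i$ I would write $x^2+\lambda=(x-1)(x+1)+(1+\lambda)$, so that $S^i(x^2+\lambda)=S^i(1+\lambda)\,S^i\!\big(1+\tfrac{(x-1)(x+1)}{1+\lambda}\big)$; here the target argument $1+\lambda$ appears naturally as the value of $x^2+\lambda$ at the zero $x=1$ of the linear factor. Applying Greene's binomial theorem $S^i(1+w)=\frac{q}{q-1}\sum_\chi \binom{S^i}{\chi}\chi(w)$ (valid for $w\neq 0$) to $w=(x-1)(x+1)/(1+\lambda)$ and interchanging sums, the inner sum becomes $\sum_x (S^i\chi)(x-1)\,\chi(x+1)$, which a shift $x\mapsto 1+2s$ turns into the Jacobi sum $J(S^i\chi,\chi)$ up to explicit character values of $2$ and $-1$. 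This presents $T_i$ as a single sum over $\chi$ of a product of two binomial coefficients times $\chi$ of an explicit argument, i.e. a $_2F_1$ in the sense of \eqref{eq00}, together with a boundary correction from $x=-1$ (where the binomial theorem fails) and the $\delta$-terms of \eqref{eq01}.

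The square‑root characters in \eqref{eq114} enter at the next step. Converting the Jacobi and binomial factors to Gauss sums $g(\cdot)$, I would use the hypothesis that $\tfrac{q-1}{l}$ is even: this guarantees that $\sqrt{S^i}$, and hence $\sqrt{S^{3i}}=(\sqrt{S^i})^3$, are genuine characters. Applying the Hasse--Davenport product relation $g(\rho)g(\rho\phi)=\overline{\rho}(4)\,g(\rho^2)g(\phi)$ with $\rho=\sqrt{S^{3i}}$ splits $g(S^{3i})$ and inserts both the quadratic character $\phi$ and the half‑characters of \eqref{eq114}; repackaging through \eqref{eq00} then yields exactly $_2F_1\big(\sqrt{S^{3i}}\phi,\sqrt{S^{3i}};S^{2i}\mid 1+\lambda\big)$ with the stated constant $\tfrac{J(\phi,S^{-i})}{J(\sqrt{S^i},\sqrt{S^{-3i}}\phi)}$, establishing \eqref{eq114}. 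For $l=3$ the square‑root step is unnecessary ($3\mid l$ and no evenness is assumed): the $\chi$‑sum from the previous paragraph is already of the requisite bilinear shape and is read off directly from \eqref{eq00} as $q\cdot{_2}F_1(\phi,\varepsilon;S^i\mid 1+\lambda)$, while the three points at infinity recorded in \eqref{eq12} supply the summand $2$, giving \eqref{eq115}.

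I expect the main obstacle to be the Gauss‑sum bookkeeping in the third step: arranging the Hasse--Davenport application so that it produces precisely the parameters $\sqrt{S^{3i}}\phi,\sqrt{S^{3i}},S^{2i}$ and the exact Jacobi‑sum constant of \eqref{eq114}, rather than an equivalent series related by a finite‑field quadratic transformation, and simultaneously absorbing the $x=-1$ boundary term and the $\delta$‑corrections of \eqref{eq01} so that the resulting identity is exact rather than merely valid up to lower‑order terms.
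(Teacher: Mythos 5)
Your route is genuinely different from the paper's. The paper writes $T_i:=\sum_{x}S^i\big((x-1)(x^2+\lambda)\big)=S^i(-\lambda)\,g\big(S^i,S^i;-\tfrac{1}{\lambda}\big)$ and then invokes two ready-made results: Theorem \ref{thm1} (\cite[Thm. 2.2]{evans}) to pass from $g$ to $F^*(S^{-3i},S^{-2i};1+\lambda)$, and Theorem \ref{thm3} (\cite[Thm. 1.2]{evans1}) to convert $F^*(R^2,C;x)$ into the half-character ${_2}F_1$ with its Jacobi-sum constant, while $l=3$ is handled by the bespoke Proposition \ref{propo1}. Your plan instead expands $T_i$ by Greene's binomial theorem \eqref{eq121}. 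Your reduction to $T_i$ and the binomial/Jacobi computation are correct: writing $x^2+\lambda=(x-1)(x+1)+(1+\lambda)$ and evaluating the inner sums as Jacobi sums gives
$$T_i=S^i(-2(1+\lambda))\left[1+\frac{q^2}{q-1}\sum_{\chi}{S^i \choose \chi}{S^i\chi \choose S^i\chi^{2}}\chi\left(\frac{4}{1+\lambda}\right)\right],$$
and the factor ${S^i\chi \choose S^i\chi^{2}}$, i.e.\ a Gauss-sum factor $G(S^i\chi^2)$ quadratic in $\chi$, is exactly what Hasse--Davenport must linearize. Two caveats, though: the product relation has to be applied \emph{inside} the $\chi$-sum with $\rho=\sqrt{S^i}\chi$ varying with $\chi$ (there is no standalone Gauss sum $G(S^{3i})$ anywhere to split, contrary to your description), and carrying this out and reassembling the result into ${_2}F_1\big(\sqrt{S^{3i}}\phi,\sqrt{S^{3i}};S^{2i}\mid 1+\lambda\big)$ with the stated constant is, in substance, a reproof of \cite[Thm. 1.2]{evans1}, which the paper simply cites. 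With that understood, your argument for \eqref{eq114} (where $3\nmid l$ and $\tfrac{q-1}{l}$ even guarantee $S^{3i}\neq\varepsilon$ and the existence of $\sqrt{S^i}$) can be pushed through.

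The genuine gap is your treatment of $l=3$. You claim the $\chi$-sum is ``already of the requisite bilinear shape and is read off directly from \eqref{eq00}.'' It is not: the quadratic factor ${S^i\chi \choose S^i\chi^{2}}$ is present for $l=3$ exactly as for every other $l$, and no amount of Greene's binomial-coefficient identities will match the summand termwise with the form ${A\chi \choose \chi}{B\chi \choose C\chi}\chi(x)$ demanded by \eqref{eq00}, because the latter involves only Gauss sums at arguments of the form $(\text{fixed})\cdot\chi^{\pm1}$. In particular the upper parameter $\phi$ in ${_2}F_1\big(\phi,\varepsilon;S^i\mid 1+\lambda\big)$ of \eqref{eq115} cannot materialize without a duplication step; in the paper it enters through Greene's identity (4.21) inside the proof of Proposition \ref{propo1}. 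Far from being the easy case, $l=3$ is the degenerate one: there $S^{3i}=\varepsilon$, the hypothesis $R^2\notin\{\varepsilon,C,C^2\}$ of \cite[Thm. 1.2]{evans1} fails, and the constants collapse through the $\delta$-terms of \eqref{eq01} --- which is precisely why \eqref{eq115} has constant $1$ in front of the series rather than a ratio of Jacobi sums, and why the paper proves Proposition \ref{propo1} at all. Your sketch as written would stall here; it is repairable (note $\sqrt{S^i}=S^{2i}$ exists since $S^i$ has odd order), but only by running the same Hasse--Davenport step and then tracking the degenerate terms, not by reading anything off directly.
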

\begin{theorem}\label{thm6}
If $q\equiv 1$ $($\emph{mod} $l)$, then for $\lambda=\frac{1}{3}$, we have
\begin{align}
-a_q(C_{l, \lambda})
=\left\{
\begin{array}{lll}
0, & \hbox{if $l\neq 3$ and $q\equiv 2$ $($\emph{mod} $3)$;}\\
q\cdot \displaystyle\sum_{i=1}^{l-1}S^i(\frac{27}{8})
\left[{\chi_3 \choose S^i}+{\chi_3^2 \choose S^i}\right], & \hbox{if $l\neq 3$ and $q\equiv 1$ $($\emph{mod} $3)$;} \\
2+q\cdot \displaystyle\sum_{i=1}^{2}
\left[{\chi_3 \choose \chi_3^i}+{\chi_3^2 \choose \chi_3^i}\right], & \hbox{if $l=3$,}
\end{array}
\right.\notag
\end{align}
where $S$ and $\chi_3$ are characters on $\mathbb{F}_q$ of order $l$ and $3$ respectively.
\end{theorem}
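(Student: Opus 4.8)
The plan is to count the affine points on $C_{l,\lambda}$ directly by multiplicative characters, exploiting a special factorization available when $\lambda = \frac{1}{3}$. Since good reduction at $\lambda = \frac{1}{3}$ forces $p \neq 2, 3$, the elements $\frac{1}{3}, \frac{2}{3} \in \mathbb{F}_q$ are well-defined and nonzero. Writing $f(x) := (x-1)(x^2 + \frac{1}{3})$ and completing the cube gives the identity $f(x) = (x - \frac{1}{3})^3 - (\frac{2}{3})^3$. As $q \equiv 1 \pmod{l}$ furnishes a character $S$ of order $l$, the number of $y \in \mathbb{F}_q$ with $y^l = a$ equals $\sum_{j=0}^{l-1} S^j(a)$ for $a \neq 0$ and equals $1$ for $a = 0$; summing over $x$ and peeling off the $j = 0$ term yields the identity
\begin{align*}
\#\{(x,y) \in \mathbb{F}_q^2 : y^l = f(x)\} = q + \sum_{i=1}^{l-1} \sum_{x \in \mathbb{F}_q} S^i(f(x)).
\end{align*}
Combining this with the point counts \eqref{eq11}, \eqref{eq12} and the definition \eqref{eq2} reduces the theorem to evaluating $\sum_{x} S^i(f(x))$, the cases $l \neq 3$ and $l = 3$ differing only through the number of points at infinity, which contributes the extra $+2$ when $l = 3$.

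Next I would linearize the inner sum. Using $f(x) = (x - \frac{1}{3})^3 - (\frac{2}{3})^3$ together with the substitutions $u = x - \frac{1}{3}$ and then $u = \frac{2}{3} v$, both bijections of $\mathbb{F}_q$, multiplicativity of $S^i$ gives $\sum_{x} S^i(f(x)) = S^i(\frac{8}{27}) \sum_{v} S^i(v^3 - 1)$. The whole argument thus rests on the sum $T_i := \sum_{v} S^i(v^3 - 1)$, whose behaviour splits according to the residue of $q$ modulo $3$ because of the cube $v^3$.

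When $q \equiv 2 \pmod{3}$ the map $v \mapsto v^3$ is a bijection of $\mathbb{F}_q$, so $T_i = \sum_{w} S^i(w - 1) = 0$ for every nontrivial $S^i$, which instantly gives the first case. When $q \equiv 1 \pmod{3}$ there is a character $\chi_3$ of order $3$, and for $w \neq 0$ the fibre of $v \mapsto v^3$ over $w$ has size $\sum_{j=0}^{2} \chi_3^j(w)$. Isolating the $v = 0$ and $w = 0$ contributions, I would rewrite $T_i$ as a vanishing principal term plus two Jacobi sums, obtaining $T_i = S^i(-1)\left(J(\chi_3, S^i) + J(\chi_3^2, S^i)\right)$. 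Converting each Jacobi sum by the defining relation ${A \choose B} = \frac{B(-1)}{q} J(A, \overline{B})$ of \eqref{eq0} and using $S^i(-1)^2 = 1$ then collapses this to $T_i = q\left({\chi_3 \choose S^{-i}} + {\chi_3^2 \choose S^{-i}}\right)$.

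Assembling the pieces gives $-a_q(C_{l,\lambda}) = q \sum_{i=1}^{l-1} S^i(\frac{8}{27})\left({\chi_3 \choose S^{-i}} + {\chi_3^2 \choose S^{-i}}\right)$ for $l \neq 3$; the reindexing $i \mapsto l - i$, under which $S^{-i} \mapsto S^i$ and $S^i(\frac{8}{27}) \mapsto S^i(\frac{27}{8})$, puts this in the stated form. For $l = 3$ one takes $S = \chi_3$, observes that $S^i(\frac{8}{27}) = 1$ because $\frac{8}{27}$ is a cube and $S$ has order $3$, and adds the $+2$ from the three points at infinity. The main obstacle I anticipate is the character bookkeeping in the $q \equiv 1 \pmod{3}$ case: carefully separating the $v = 0$ and $w = 0$ terms, tracking the factors $S^i(-1)$, applying the Jacobi-to-binomial conversion correctly, and performing the final reindexing so that indices match the claimed expression exactly.
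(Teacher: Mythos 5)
Your proposal is correct and follows essentially the same route as the paper: the same completion-of-the-cube substitution reducing $(x-1)(x^2+\frac{1}{3})$ to a constant times $v^3-1$, the same character point-count identity together with \eqref{eq11}/\eqref{eq12}, and the same case split on $q \bmod 3$. The only difference is in the middle computation: where the paper expands $S^i(1+x^3)$ via Greene's binomial theorem \eqref{eq121} and uses orthogonality to kill all characters with $\chi^3\neq\varepsilon$, you count cube fibres with $\chi_3$ and evaluate the resulting Jacobi sums directly via \eqref{eq0} — an equivalent calculation that in fact arrives at the stated form ${\chi_3 \choose S^i}+{\chi_3^2 \choose S^i}$ (after your reindexing $i\mapsto l-i$) more transparently than the paper's write-up, which leaves the final binomial-coefficient symmetry step implicit.
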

We also give an alternate proof of the following result of D. McCarthy.
\begin{theorem}\cite[Thm. 2.3]{mccarthy}\label{thm44}
If $q\equiv 1~($\emph{mod} $3)$, then
\begin{align}
-\frac{\phi(-2)}{q}\cdot a_q(C_{2,\frac{1}{3}})=2\emph{Re}{\chi_3 \choose \phi}\notag
\end{align}
and
\begin{align}
-\phi(-2)\cdot a_q(C_{2,\frac{1}{3}})=2\emph{Re}\left[\frac{G(\chi_3)G(\phi)}{G(\chi_3\phi)}\right],\notag
\end{align}
where $\chi_3$ is a character of order $3$ of $\mathbb{F}_q$ and $G(\chi)$ is a Gauss sum.
\end{theorem}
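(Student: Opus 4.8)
The plan is to obtain Theorem~\ref{thm44} as a direct specialization of Theorem~\ref{thm6} to the case $l=2$, $\lambda=\frac13$. When $l=2$ the character $S$ has order $2$, so $S=\phi$, and the sum $\sum_{i=1}^{l-1}$ reduces to its single term $i=1$. Since $l\neq 3$ and, by hypothesis, $q\equiv 1\pmod 3$, the second line of Theorem~\ref{thm6} applies and gives
\begin{align}
-a_q(C_{2,\frac13})=q\cdot\phi\Bigl(\tfrac{27}{8}\Bigr)\left[{\chi_3 \choose \phi}+{\chi_3^2 \choose \phi}\right].\notag
\end{align}
Both displayed identities of the theorem will be read off from this one relation.

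First I would convert the bracket into a real part. From the definition \eqref{eq0} one verifies $\overline{{A \choose B}}={\overline{A} \choose \overline{B}}$; since $\overline{\phi}=\phi$ and $\overline{\chi_3}=\chi_3^2$, this gives ${\chi_3^2 \choose \phi}=\overline{{\chi_3 \choose \phi}}$, so the bracket equals $2\,\mathrm{Re}\,{\chi_3 \choose \phi}$. Next I would simplify the constant. As $\phi$ is quadratic, $\phi(\tfrac{27}{8})=\phi(27)\phi(8)=\phi(3)\phi(2)=\phi(6)=\phi(-2)\,\phi(-3)$. The essential arithmetic input is that the hypothesis $q\equiv 1\pmod 3$ forces $-3$ to be a square in $\mathbb{F}_q$: the field then contains a primitive cube root of unity $\zeta_3$, and $\sqrt{-3}=2\zeta_3+1\in\mathbb{F}_q$, whence $\phi(-3)=1$ and $\phi(\tfrac{27}{8})=\phi(-2)$. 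Multiplying the relation through by $\tfrac{\phi(-2)}{q}$ and using $\phi(-2)^2=\phi(4)=1$ then yields the first equation.

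For the second equation I would pass from the binomial coefficient to Gauss sums. By \eqref{eq0} we have ${\chi_3 \choose \phi}=\tfrac{\phi(-1)}{q}J(\chi_3,\phi)$, and since $\chi_3$, $\phi$ and $\chi_3\phi$ are all nontrivial, $J(\chi_3,\phi)=\tfrac{G(\chi_3)G(\phi)}{G(\chi_3\phi)}$. Using the conjugation rule $\overline{G(\chi)}=\chi(-1)G(\overline{\chi})$ together with $\chi_3(-1)=1$, one checks that the $\chi_3^2$-contribution is exactly the complex conjugate of the $\chi_3$-contribution, so that the bracket $2\,\mathrm{Re}\,{\chi_3 \choose \phi}$ transforms into a corresponding real part of $\tfrac{G(\chi_3)G(\phi)}{G(\chi_3\phi)}$. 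Substituting this into the first equation and clearing the factor $q$ produces the second equation.

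The steps involving conjugation and the reduction of the sum to a single term are routine. I expect the main obstacle to lie in two places: first, the evaluation $\phi(\tfrac{27}{8})=\phi(-2)$, which rests squarely on the number-theoretic fact that $q\equiv 1\pmod 3$ guarantees $\phi(-3)=1$; and second, in the passage to Gauss sums, the careful tracking of the character values at $-1$ (namely $\chi_3(-1)=1$ and the factor $\phi(-1)$ arising from \eqref{eq0}) together with the Gauss-sum normalization, which must be handled attentively so that the constants collapse to the stated form.
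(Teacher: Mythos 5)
Your route is exactly the paper's: specialize Theorem \ref{thm6} to $l=2$, $\lambda=\frac{1}{3}$ (so $S=\phi$ and only the term $i=1$ survives), identify the bracket as $2\,\mathrm{Re}\,{\chi_3 \choose \phi}$ via conjugation, reduce $\phi(\frac{27}{8})=\phi(6)=\phi(-2)\phi(-3)$ to $\phi(-2)$ using $\phi(-3)=1$ for $q\equiv 1\pmod{3}$, and then pass to Gauss sums through $J(\chi,\psi)=G(\chi)G(\psi)/G(\chi\psi)$. Your proof of the first displayed identity is complete and correct, and in fact more detailed than the paper's, since you justify $\phi(-3)=1$ explicitly.

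The gap is in the second identity, precisely at the point you yourself flagged: the constants do not ``collapse to the stated form.'' By \eqref{eq0}, ${\chi_3 \choose \phi}=\frac{\phi(-1)}{q}J(\chi_3,\phi)$, so the first identity yields
\begin{align}
-\phi(-2)\,a_q(C_{2,\frac{1}{3}})=2q\,\mathrm{Re}\,{\chi_3 \choose \phi}
=2\phi(-1)\,\mathrm{Re}\,J(\chi_3,\phi)
=2\phi(-1)\,\mathrm{Re}\left[\frac{G(\chi_3)G(\phi)}{G(\chi_3\phi)}\right],\notag
\end{align}
and the factor $\phi(-1)$ does not cancel: conjugating $G(\chi_3)G(\phi)/G(\chi_3\phi)$ (using $\overline{G(\chi)}=\chi(-1)G(\overline{\chi})$ and $\chi_3(-1)=1$) produces $J(\chi_3^2,\phi)$ with no extra sign, so no compensating $\phi(-1)$ arises on the Gauss-sum side. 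What your argument actually proves is therefore $-\phi(2)\,a_q(C_{2,\frac{1}{3}})=2\,\mathrm{Re}\bigl[G(\chi_3)G(\phi)/G(\chi_3\phi)\bigr]$, which agrees with the stated second identity only when $q\equiv 1\pmod{4}$. The discrepancy is real, not notational: for $q=7$ one computes $a_7(C_{2,\frac{1}{3}})=-4$, $\phi(-2)=-1$, and $J(\chi_3,\phi)=2\pm\sqrt{-3}$, so the first identity holds while the second, as displayed, fails by a sign. To be fair, the paper's own proof has exactly the same lacuna---it merely cites $J(\chi,\psi)=G(\chi)G(\psi)/G(\chi\psi)$ and declares the second part to follow---so your attempt faithfully reproduces the published argument; but the claim that the $\phi(-1)$ bookkeeping works out cannot be sustained for $q\equiv 3\pmod{4}$: either an extra factor $\phi(-1)$ must appear in the second identity, or an additional hypothesis such as $q\equiv 1\pmod{12}$ is needed.
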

In section 4, we will prove the following results on special values of ${_{3}}F_2$ and ${_{2}}F_1$ hypergeometric series. 
In \cite[Thm. 1.3]{evans}, R. Evans and J. Greene gave an expression for ${_{3}}F_2(\frac{1}{4})$ which was an extension of a result of K. Ono\cite{ono}. 
The following result gives the value of ${_{3}}F_2(4)$ which also extends another result of K. Ono\cite{ono}.
\begin{theorem}\label{thm5}
If $S$ is a character on $\mathbb{F}_q$ with order not equal to $1$, $3$, or $4$, then
\begin{align}
{_{3}}F_2\left(\begin{array}{cccc}
                S^{-3}, & S^{-1}, & S^{-2}\phi\\
                   & S^{-4}, & S^{-2}
              \end{array}\mid 4 \right)
              =\left\{
              \begin{array}{ll}
\displaystyle-\frac{\phi(-3)S(16)}{q}, &\hspace{-4cm} \hbox{if $q\equiv 2$ $($\emph{mod} $3)$;}\\
\displaystyle\frac{S(-\frac{16}{27})J(S^{-1}, S^{-1})}{J(S^{-3}, S)}\left[{S \choose \chi_3}
+{S \choose \chi_3^2}\right]^2-\frac{\phi(-3)S(16)}{q},\notag\\
&\hspace{-4cm}\hbox{if $q\equiv 1$ $($\emph{mod} $3)$,}
\end{array}
\right.\notag
\end{align}
where $\chi_3$ is a character of order $3$ of $\mathbb{F}_q$.
\end{theorem}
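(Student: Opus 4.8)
The value $4$ of the argument forces the specialization $\lambda=\tfrac13$, since $\tfrac{1+\lambda}{\lambda}=4$ exactly when $\lambda=\tfrac13$. Writing $l$ for the order of $S$ and using $S^l=\varepsilon$, one has $S^{3(l-1)}=S^{-3}$, $S^{4(l-1)}=S^{-4}$, $S^{l-1}=S^{-1}$ and $S^{-(l-1)}=S$, so the parameter array $\bigl(S^{-3},S^{-1},S^{-2}\phi;\,S^{-4},S^{-2}\bigr)$ at argument $4$ is precisely the $i=l-1$ summand of Theorem \ref{thm4} taken at $\lambda=\tfrac13$. The hypotheses $\mathrm{ord}(S)\notin\{1,3,4\}$ are exactly what keep this summand nondegenerate: they guarantee $S^{-3}\neq\varepsilon$, $S^{-4}\neq\varepsilon$, and $\chi_3\neq S^{\pm1}$, so no binomial coefficient collapses via \eqref{eq01}. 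The plan is therefore to isolate this one summand and evaluate it using Theorem \ref{thm6}.

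The mechanism is the per-character identity underlying Theorem \ref{thm4}. Setting $T_i:=\sum_{x\in\mathbb{F}_q}S^i\bigl((x-1)(x^2+\lambda)\bigr)$, the point count gives $a_q(C_{l,\lambda})=-\sum_{i=1}^{l-1}T_i$ for $l\neq3$, and the proof of Theorem \ref{thm4} rests on establishing, before summation, that
\[
T_i^2=q^2\,\frac{J(S^{3i},S^{-i})}{S^i(-4\lambda^3)J(S^i,S^i)}\,{_3}F_2(\cdots)_i+q\,\frac{\phi(-\lambda)J(S^{3i},S^{-i})}{S^i(-4\lambda(1+\lambda)^2)J(S^i,S^i)};
\]
summing over $i$ returns Theorem \ref{thm4}, the cross terms $\sum_{i\neq j}T_iT_j$ being collected into $Q$. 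I would take the $i=l-1$ instance at $\lambda=\tfrac13$ and solve for the hypergeometric value, namely
\[
{_3}F_2(4)=\frac{T_{l-1}^2}{q^2c_{l-1}}-\frac{d_{l-1}}{q\,c_{l-1}},
\]
where $c_{l-1},d_{l-1}$ denote the two displayed coefficients at $i=l-1$.

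Two evaluations then finish the argument. The coefficient bookkeeping at $\lambda=\tfrac13$ uses $-4\lambda^3=-\tfrac{4}{27}$ and $-4\lambda(1+\lambda)^2=-\tfrac{64}{27}$ to give $\tfrac{1}{q^2c_{l-1}}=\tfrac{S(-27/4)}{q^2}\,\tfrac{J(S^{-1},S^{-1})}{J(S^{-3},S)}$ and, with $\phi(-1/3)=\phi(-3)$ and $S(-64/27)/S(-4/27)=S(16)$, the clean relation $\tfrac{d_{l-1}}{q\,c_{l-1}}=\tfrac{\phi(-3)S(16)}{q}$. For the character sum itself, $T_{l-1}=\sum_x S^{-1}\bigl((x-1)(x^2+\tfrac13)\bigr)$ is exactly the sum evaluated by the computation behind Theorem \ref{thm6} at $\lambda=\tfrac13$ (the cubic character $\chi_3$ entering through the $j$-invariant $0$ of the associated curve of Remark \ref{remark1}); read per character, that computation gives $T_{l-1}=0$ when $q\equiv2\pmod{3}$, and $T_{l-1}=q\,S^{-1}(\tfrac{27}{8})\bigl[{\chi_3 \choose S^{-1}}+{\chi_3^2 \choose S^{-1}}\bigr]$ when $q\equiv1\pmod{3}$.

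For $q\equiv2\pmod{3}$ the first term vanishes and only $-\tfrac{\phi(-3)S(16)}{q}$ survives. For $q\equiv1\pmod{3}$ I would square $T_{l-1}$, use $S^{-2}(\tfrac{27}{8})=S(\tfrac{64}{729})$ and $S(\tfrac{64}{729})\,S(-\tfrac{27}{4})=S(-\tfrac{16}{27})$, and finally convert the binomial coefficients by the transformation ${A\choose B}=A(-1)B(-1){\overline{B}\choose\overline{A}}$, which follows directly from \eqref{eq0}: since $\chi_3(-1)=1$ it yields ${\chi_3 \choose S^{-1}}=S(-1){S \choose \chi_3^2}$ and ${\chi_3^2 \choose S^{-1}}=S(-1){S \choose \chi_3}$, and the nuisance factor $S(-1)$ disappears after squaring because $S(-1)^2=1$. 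Assembling the pieces reproduces both branches of the stated formula. The main obstacle is the second paragraph: rigorously isolating a single summand from each of Theorems \ref{thm4} and \ref{thm6}, that is, extracting and justifying the per-character identity for $T_i^2$ together with the per-character value of $T_i$, after which everything reduces to the Jacobi- and Gauss-sum bookkeeping indicated above.
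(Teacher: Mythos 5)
Your plan is, in substance, the paper's own proof: the ``per-character identity underlying Theorem \ref{thm4}'' is exactly the paper's Lemma \ref{lemma2} (Theorem \ref{thm2} applied with $A=S^{-3}$, $C=S^{-2}$), and the ``per-character value of $T_i$'' at $\lambda=\tfrac{1}{3}$ is exactly the paper's Lemma \ref{lemma3}; combining them is how the paper concludes. The gap is that both of your per-character extractions are misindexed, and as stated both are false. By \eqref{eq6} and \eqref{eq001}, $T_i^2$ is paired with the ${_{3}}F_2$ whose parameters are $S^{-3i},S^{-i},S^{-2i}\phi;S^{-4i},S^{-2i}$; after the re-indexing $i\mapsto l-i$ that converts \eqref{eq001} into the displayed sum of Theorem \ref{thm4}, the $i$-th displayed summand is paired with $T_{l-i}^2$, not $T_i^2$. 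Hence your target ${_{3}}F_2(4)$, with parameters $(S^{-3},S^{-1},S^{-2}\phi;S^{-4},S^{-2})$, is controlled by $T_1^2=\bigl[\sum_{x}S((x-1)(x^2+\lambda))\bigr]^2$, not by $T_{l-1}^2$; this is visible in Lemma \ref{lemma2}, where the character sum carries $S$, not $S^{-1}$. Since $T_{l-i}=\overline{T_i}$, your identity $T_i^2=q^2c_iF_i+qd_i$ would force $T_i^2$ to be real, which fails in general.

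The same flip occurs in your evaluation of $T_{l-1}$. The per-character content of Theorem \ref{thm6} is Lemma \ref{lemma3}: $T_i=qS^i(-\tfrac{8}{27})\bigl[{S^i\choose\chi_3}+{S^i\choose\chi_3^2}\bigr]$; the displayed summand $qS^i(\tfrac{27}{8})\bigl[{\chi_3\choose S^i}+{\chi_3^2\choose S^i}\bigr]$ equals $T_{l-i}$, not $T_i$ (the two forms agree only after the swap $i\mapsto l-i$, using $J(A,B)=J(B,A)$). So the quantity you call $T_{l-1}$, namely $qS^{-1}(\tfrac{27}{8})\bigl[{\chi_3\choose S^{-1}}+{\chi_3^2\choose S^{-1}}\bigr]$, is in fact $T_1$. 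The two index errors are the same conjugation and cancel exactly: what you actually compute is $T_1^2/(q^2c_{l-1})-d_{l-1}/(qc_{l-1})$ with the correct value of $T_1$, and your coefficient bookkeeping ($d_{l-1}/(qc_{l-1})=\phi(-3)S(16)/q$, $S(\tfrac{64}{729})S(-\tfrac{27}{4})=S(-\tfrac{16}{27})$, the nuisance factor $S(-1)$ squaring away) is correct, so your final two-branch formula is the right one. But as written, neither intermediate identity is true, and these are precisely the steps you defer as ``the main obstacle.'' The repair is to run the whole argument with the character $S$ itself rather than with $S^{-1}=S^{l-1}$: square the sum $\sum_{x}S((x-1)(x^2+\lambda))$ via Theorem \ref{thm2}, and evaluate that same sum at $\lambda=\tfrac{1}{3}$ --- which is exactly the paper's route through Lemmas \ref{lemma2} and \ref{lemma3}.
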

The result of K. Ono can be obtained by putting $S=\phi$, thus solving a problem posed by M. Koike \cite[p. 465]{koike}. We remark that in view of \cite[Thm. 4.2]{greene}, there is a result similar to Theorem \ref{thm5} in which the argument $4$ is replaced by $\frac{1}{4}$. However, our result about ${_{3}}F_2(\frac{1}{4})$ will be different from the result obtained by R. Evans and J. Greene.
\begin{theorem}\label{thm8}
Let $S$ be a character on $\mathbb{F}_q$ whose order is not equal to $3$. If $S$ is square of some character on $\mathbb{F}_q$, then
\begin{align}
&(i)~{_{2}}F_1\left(\begin{array}{cccc}
                \sqrt{S^{-3}}\phi, & \sqrt{S^{-3}}\\
                 & S^{-2}
              \end{array}\mid \frac{4}{3} \right)
              =\left\{
              \begin{array}{ll}
              0, &\hspace{-2cm} \hbox{if $q\equiv 2$ $($\emph{mod} $3)$;} \\
              \displaystyle \frac{S(\frac{8}{27})J(\sqrt{S^{-1}},\sqrt{S^3}\phi)}{J(\phi,S)}
              \left[\displaystyle {S \choose \chi_3}+{S \choose \chi_3^2}\right],\\
              &\hspace{-2cm} \hbox{if $q\equiv 1$ $($\emph{mod} $3)$.}
              \end{array}
              \right.\notag\\
&(ii)~{_{2}}F_1\left(\begin{array}{cccc}
                \sqrt{S^{-3}}\phi, & \sqrt{S^{-3}}\\
                 & S^{-1}\phi
              \end{array}\mid -\frac{1}{3} \right)
              =\left\{
              \begin{array}{ll}
              0, & \hspace{-2.5cm}\hbox{if $q\equiv 2$ $($\emph{mod} $3)$;} \\
              \displaystyle\frac{S(\frac{8}{27})J(\sqrt{S^{-1}},\sqrt{S^3}\phi)}{\sqrt{S}\phi(-1)J(\phi,S)}
              \displaystyle\left[{S \choose \chi_3}+{S \choose \chi_3^2}\right],\\
              & \hspace{-2.5cm}\hbox{if $q\equiv 1$ $($\emph{mod} $3)$.}
              \end{array}
              \right.\notag\\
&(iii)~{_{2}}F_1\left(\begin{array}{cccc}
                \sqrt{S^{-3}}\phi, & \sqrt{S^{-1}}\\
                 & S^{-2}
              \end{array}\mid 4 \right)=\left\{
              \begin{array}{ll}
              0, &\hspace{-2.9cm} \hbox{if $q\equiv 2$ $($\emph{mod} $3)$;} \\
              \displaystyle\frac{\sqrt{S}(-\frac{64}{27})J(\sqrt{S^{-1}},\sqrt{S^3}\phi)}{\phi(-3)J(\phi,S)}
              \displaystyle\left[{S \choose \chi_3}+{S \choose \chi_3^2}\right],\\
              & \hspace{-2.9cm}\hbox{if $q\equiv 1$ $($\emph{mod} $3)$.}
              \end{array}
              \right.\notag\\
&(iv)~{_{2}}F_1\left(\begin{array}{cccc}
                \sqrt{S^{-3}}\phi, & \sqrt{S}\phi\\
                 & S^{-1}\phi
              \end{array}\mid \frac{1}{4} \right)=\left\{
              \begin{array}{ll}
              0, & \hspace{-2.9cm}\hbox{if $q\equiv 2$ $($\emph{mod} $3)$;} \\
              \displaystyle\frac{\sqrt{S}(-\frac{1}{27})J(\sqrt{S^{-1}},\sqrt{S^3}\phi)}{\phi(3)J(\phi,S)}
              \displaystyle\left[{S \choose \chi_3}+{S \choose \chi_3^2}\right],\\
              &\hspace{-2.9cm} \hbox{if $q\equiv 1$ $($\emph{mod} $3)$.}
              \end{array}
              \right.\notag
\end{align}
\end{theorem}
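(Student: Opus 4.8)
The plan is to prove part~(i) directly, by recognising the ${_2}F_1\big(\cdots\mid\tfrac43\big)$ appearing there as the $\lambda=\tfrac13$ specialisation (so that $1+\lambda=\tfrac43$) of the series in Theorem~\ref{thm22}, and then to obtain parts~(ii)--(iv) from~(i) purely formally, by moving the argument $\tfrac43$ around its orbit under the anharmonic group of Greene's ${_2}F_1$ transformation laws.

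For part~(i), note that $\#C_{l,1/3}(\mathbb F_q)$ is built from the multiplicative character sums $\sum_{x\in\mathbb F_q}\chi\big((x-1)(x^2+\tfrac13)\big)$ as $\chi$ runs over the characters with $\chi^l=\varepsilon$; the computation behind Theorem~\ref{thm22} evaluates each such sum as a Jacobi--sum multiple of a ${_2}F_1(\tfrac43)$, while that behind Theorem~\ref{thm6} evaluates it through the binomial coefficients ${\chi_3\choose\chi}$ and ${\chi_3^2\choose\chi}$. I would carry out both of these evaluations for a single square character $\chi$ of order $\neq3$ (taking $\operatorname{ord}(\chi)=l$ with $\tfrac{q-1}{l}$ even, which guarantees $\chi$ is a square as in Theorem~\ref{thm22}) rather than summing over the index $i$, and equate the two answers; after the relabelling $\chi=S^{-1}$ this is exactly the $q\equiv1\pmod3$ formula of part~(i). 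The same single-character computation yields the value $0$ when $q\equiv2\pmod3$, since there the value of the sum is carried entirely by the cubic characters $\chi_3,\chi_3^2$, which do not exist, so the series degenerates to $0$.

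To pass from~(i) to the remaining three evaluations I would use the finite field analogues of the classical $x\mapsto1-x$, $x\mapsto\tfrac{x}{x-1}$ and $x\mapsto\tfrac1x$ transformations, which send $\tfrac43$ to $-\tfrac13$, to $\tfrac{4/3}{4/3-1}=4$, and to $\tfrac14$ respectively. A short parameter check confirms the targets: the reflection law fixes the two numerator characters $\sqrt{S^{-3}}\phi,\sqrt{S^{-3}}$ and replaces the denominator $S^{-2}$ by $\big(\sqrt{S^{-3}}\phi\big)\big(\sqrt{S^{-3}}\big)\overline{S^{-2}}=S^{-1}\phi$, giving~(ii); the Pfaff law replaces the numerator character $\sqrt{S^{-3}}$ by $S^{-2}\overline{\sqrt{S^{-3}}}=\sqrt{S^{-1}}$ while fixing $S^{-2}$, giving~(iii); and applying the inversion law to~(iii) produces the numerator character $\big(\sqrt{S^{-3}}\phi\big)\overline{S^{-2}}=\sqrt{S}\phi$ and the denominator $\big(\sqrt{S^{-3}}\phi\big)\overline{\sqrt{S^{-1}}}=S^{-1}\phi$, giving~(iv). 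Each transformation contributes a prefactor of the form $\overline{A}(1-x)$ or $\overline{A}(x)$ together with a ratio of Gauss (equivalently Jacobi) sums; these account precisely for the different scalars $S(\tfrac{8}{27})$, $\sqrt{S}(-\tfrac{64}{27})$, $\sqrt{S}(-\tfrac{1}{27})$ and the denominators $\sqrt{S}\phi(-1)$, $\phi(\pm3)$ in the four statements, while the common factor $\big[{S\choose\chi_3}+{S\choose\chi_3^2}\big]$ and the $q\equiv2\pmod3$ vanishing are inherited unchanged from~(i).

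The main obstacle will be twofold. First, the single-character evaluation behind part~(i) must be carried out cleanly: this is the heart of the matter, and it is where the cubic phenomenon (the $\chi_3$ factor and the $q\equiv1$ versus $q\equiv2$ dichotomy) genuinely enters, so it amounts to a finite field cubic transformation that must be produced by a careful Gauss-- and Jacobi--sum manipulation rather than merely quoted. Second, propagating the answer through three successive transformations requires exact bookkeeping of the half-characters $\sqrt{S},\sqrt{S^{-1}},\sqrt{S^{-3}}$ and of every character value at $-1$ and $3$, together with a verification---using the hypotheses that $S$ is a square and $\operatorname{ord}(S)\neq3$---that Greene's transformation identities hold here in their clean form, with no surviving degenerate ($\delta$-function) correction terms. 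Once these two points are settled, the four evaluations follow by direct substitution.
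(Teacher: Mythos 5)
Your proposal is correct and takes essentially the same approach as the paper: part (i) is exactly the paper's argument (its Lemmas \ref{lemma4} and \ref{lemma3} are precisely the two single-character evaluations of $\sum_{x}S\bigl((x-1)(x^2+\tfrac13)\bigr)$ you describe, equated at $\lambda=\tfrac13$ so that $1+\lambda=\tfrac43$), and parts (ii) and (iii) are obtained from (i) by the same reflection and Pfaff transformations of Theorem \ref{thm7}. The only deviation is part (iv), which the paper derives by applying Theorem \ref{thm7}(ii) at $x=-\tfrac13$ to (ii), whereas you invert (iii) via Greene's $x\mapsto\tfrac1x$ law; both paths land on the same parameters and the same constant $\sqrt{S}\left(-\tfrac{1}{27}\right)/\phi(3)$, so this is an immaterial difference.
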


\section{\bf Preliminaries}
We start with a result which enables us to count the number of points on a curve using multiplicative characters
on $\mathbb{F}_q$ where $q=p^e$ (see \cite{vega}).
\begin{lemma}\label{lemma1}
Let $a\in\mathbb{F}_q^{\times}$. If $n|(q-1)$, then $$\#\{x\in\mathbb{F}_q: x^n=a\}=\sum \chi(a),$$
where the sum runs over all characters $\chi$ on $\mathbb{F}_q$ of order dividing $n$.
\end{lemma}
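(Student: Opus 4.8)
The plan is to show that both sides of the claimed identity equal the same quantity, namely $n$ when $a$ is an $n$-th power in $\mathbb{F}_q^\times$ and $0$ otherwise. I would treat the direct (geometric) count on the left and the character sum on the right separately, and then match them.

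First I would count $\#\{x\in\mathbb{F}_q: x^n=a\}$ directly. Since $a\neq 0$, any solution lies in $\mathbb{F}_q^{\times}$, which is cyclic of order $q-1$. The map $x\mapsto x^n$ is a group endomorphism of $\mathbb{F}_q^{\times}$ whose kernel is the set of $n$-th roots of unity; because $n\mid(q-1)$, this kernel has exactly $n$ elements, and the image is the index-$n$ subgroup $(\mathbb{F}_q^{\times})^n$ of $n$-th powers. Hence the fiber over $a$ is either empty (when $a\notin(\mathbb{F}_q^{\times})^n$) or a coset of the kernel of cardinality $n$ (when $a\in(\mathbb{F}_q^{\times})^n$). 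Thus the left-hand side equals $n$ precisely when $a$ is an $n$-th power and $0$ otherwise.

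Next I would evaluate the character sum. Fixing a generator $\psi$ of the cyclic dual group $\widehat{\mathbb{F}_q^{\times}}$, the characters of order dividing $n$ are exactly $\psi^{m(q-1)/n}$ for $m=0,1,\ldots,n-1$, so there are $n$ of them and they form the cyclic group generated by the order-$n$ character $\eta:=\psi^{(q-1)/n}$. Writing $\omega:=\eta(a)$, which is an $n$-th root of unity, the sum becomes the geometric series $\sum_{m=0}^{n-1}\omega^m$, equal to $n$ if $\omega=1$ and $0$ otherwise. Finally I would observe that $\omega=1$ exactly when $a$ lies in the kernel of $\eta$, and since $\eta$ has order $n$ its kernel is precisely the index-$n$ subgroup $(\mathbb{F}_q^{\times})^n$ (equivalently, $a$ is an $n$-th power iff $a^{(q-1)/n}=1$). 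Hence the character sum equals $n$ when $a$ is an $n$-th power and $0$ otherwise, agreeing with the direct count.

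There is no serious obstacle here; the statement is classical orthogonality. The only points requiring a little care are the two structural facts that depend on $n\mid(q-1)$: that there are exactly $n$ characters of order dividing $n$, and that the kernel of an order-$n$ character coincides with the subgroup of $n$-th powers. Both follow from the cyclicity of $\mathbb{F}_q^{\times}$ and of its dual, so the argument is short once these are in place.
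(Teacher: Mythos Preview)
Your proof is correct and complete; this is the standard classical argument via cyclicity of $\mathbb{F}_q^{\times}$ and orthogonality. The paper itself does not prove this lemma at all but simply quotes it with a reference (see \cite{vega}), so there is no proof in the paper to compare your approach against.
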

The \emph{orthogonality relations} for multiplicative characters are (see \cite[Chapter 8]{ireland}):
\begin{align}\label{lemma11}
\sum_{x\in\mathbb{F}_q}\chi(x)=\left\{
                                  \begin{array}{ll}
                                    q-1 & \hbox{if~ $\chi=\varepsilon$;} \\
                                    0 & \hbox{if ~~$\chi\neq\varepsilon$.}
                                  \end{array}
                                \right.
\end{align}
We now restate some results of R. Evans and J. Greene \cite{evans,evans1}.
The function $F(A, B; x)$ is defined by \cite{evans1}
\begin{align}\label{eq3}
F(A, B; x):=\frac{q}{q-1}
\sum_{\chi}
{ A\chi^2 \choose \chi }{A\chi \choose B\chi }\chi\left(\frac{x}{4}\right),
\end{align}
and its normalization as
\begin{align}\label{eq4}
F^*(A, B; x):=F(A, B; x)+AB(-1)\frac{\overline{A}(\frac{x}{4})}{q}.
\end{align}
Another character sum from \cite{evans} that we will need is
\begin{align}\label{eq5}
g(A, B; x):=\sum_{t\in \mathbb{F}_q}A(1-t)B(1-xt^2), ~~~~~~~x\in \mathbb{F}_q.
\end{align}
\begin{theorem}\cite[Thm. 2.2]{evans}\label{thm1}
If $A \neq C$ and $x \notin \{0, 1\}$, then
$$F^*\left(A, C; \frac{x}{x-1}\right)=\frac{A(2)A\overline{C}(1-x)}{q}\cdot g(A\overline{C}^2, \overline{A}C; 1-x).$$
\end{theorem}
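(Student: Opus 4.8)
The identity is the finite-field analogue of Gauss's quadratic (Pfaff) transformation of the classical ${}_2F_1$: the factor $A\overline{C}(1-x)$ plays the role of the Pfaff prefactor $(1-x)^{-a}$, the substitution $x\mapsto\frac{x}{x-1}$ is the Pfaff argument change, and the appearance of $t^2$ in the definition \eqref{eq5} of $g$ against the $A\chi^2$ in the definition \eqref{eq3} of $F$ is exactly what makes the transformation quadratic. The plan is to expand \emph{both} sides into a single character sum of Gauss sums and match them term by term after reindexing, while isolating the boundary ($\delta$-type) contributions that reconcile the normalization of $F$ with that of $F^*$ in \eqref{eq4}. I would work from the $g$-side, which is the less standard one, and meet the defining sum of $F$ in the middle.

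First I would linearize $g\bigl(A\overline{C}^2,\overline{A}C;1-x\bigr)=\sum_{t}(A\overline{C}^2)(1-t)\,(\overline{A}C)\bigl(1-(1-x)t^2\bigr)$ by applying the finite-field binomial theorem to the second factor. From \eqref{eq0} and the orthogonality relations \eqref{lemma11} one has the Fourier expansion
$$(\overline{A}C)(1-z)=\frac{q}{q-1}\sum_\chi \chi(-1){\overline{A}C \choose \chi}\chi(z),\qquad z\neq 0,$$
with the exceptional values of $z$ (where naive inversion fails) handled by the $\delta$-term of \eqref{eq01}. Taking $z=(1-x)t^2$, substituting into \eqref{eq5}, and interchanging the two summations reduces the inner $t$-sum to a Jacobi sum $\sum_t(A\overline{C}^2)(1-t)\chi^2(t)=J(\chi^2,A\overline{C}^2)$. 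Thus $g$ becomes, up to the separated boundary terms, $\frac{q}{q-1}\sum_\chi \chi(-1){\overline{A}C \choose \chi}\chi(1-x)\,J(\chi^2,A\overline{C}^2)$, which I would rewrite in Gauss sums via $J(\alpha,\beta)=G(\alpha)G(\beta)/G(\alpha\beta)$.

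Next I would expand the defining sum \eqref{eq3} of $F\bigl(A,C;\tfrac{x}{x-1}\bigr)$ in the same way. Writing the two binomial coefficients ${A\chi^2\choose\chi}$ and ${A\chi\choose C\chi}$ through \eqref{eq0} and then in Gauss sums, the intermediate factor $G(A\chi)$ cancels and one is left with $G(A\chi^2)G(\overline{\chi})G(\overline{C}\,\overline{\chi})$ up to explicit constants. To match this against the $g$-side I would invoke the Hasse--Davenport product relation
$$G(\chi^2)=\frac{\chi(4)}{G(\phi)}\,G(\chi)\,G(\chi\phi),$$
applied to the quadratic part of $G(A\chi^2)$. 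This duplication is the source of every quadratic artifact: the factor $\chi(4)$ combines with $\chi(1-x)$ and the relation between the arguments of $g$ and $F$ to produce the argument $\frac{x}{4(x-1)}=\tfrac14\cdot\tfrac{x}{x-1}$ demanded by \eqref{eq3}; the quadratic character $\phi$ and a single factor $A(2)$ (from the ``half'' of $A(4)=A(2)^2$) assemble into the prefactor $\frac{A(2)A\overline{C}(1-x)}{q}$, with $A\overline{C}(1-x)$ surviving as the Pfaff-type factor. Matching the two Gauss-sum sums termwise then yields the identity for the unnormalized $F$.

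Finally I would restore the normalization. The boundary terms set aside when applying the binomial theorem — controlled by \eqref{eq01} and coming from the values of $t$ at which $1-(1-x)t^2=0$ — contribute precisely the difference $AC(-1)\frac{\overline{A}(y/4)}{q}$ with $y=\tfrac{x}{x-1}$ appearing in \eqref{eq4}, so that the left-hand side upgrades from $F$ to $F^*$; the hypotheses $A\neq C$ and $x\notin\{0,1\}$ are exactly what keep the relevant binomial coefficients and arguments non-degenerate so this accounting closes. The main obstacle I anticipate is this last bookkeeping: because $t\mapsto t^2$ is two-to-one, the duplication injects $\phi$ and powers of $2$ that must be tracked without slippage, and the $\delta$-corrections must reconstruct the $F^*$ normalization term exactly; getting the factor $\tfrac14$ in the argument and the single $A(2)$ in the prefactor to emerge \emph{simultaneously} is the delicate point, while everything else is routine Jacobi- and Gauss-sum manipulation.
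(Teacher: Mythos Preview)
The paper does not contain a proof of this statement at all: Theorem~\ref{thm1} is quoted verbatim from Evans and Greene \cite[Thm.~2.2]{evans} in the Preliminaries section and is used only as a black box in the proofs of Theorems~\ref{thm4}, \ref{thm22} and Lemma~\ref{lemma4}. There is therefore nothing in the present paper to compare your argument against.

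That said, your sketch is essentially the Evans--Greene proof in outline: expand $g(A\overline{C}^2,\overline{A}C;1-x)$ in characters via the finite-field binomial theorem, collapse the inner $t$-sum to a Jacobi sum, pass to Gauss sums, and match with the defining sum of $F$ after applying the Hasse--Davenport duplication $G(\chi^2)=\chi(4)G(\chi)G(\chi\phi)/G(\phi)$. The identification of the leftover $\delta$-terms with the $F\to F^*$ correction in \eqref{eq4} is exactly the point where the hypotheses $A\neq C$ and $x\notin\{0,1\}$ are used, as you note. Your anticipated ``main obstacle'' (the bookkeeping of the factors of $2$ and $4$ and the sign from $\phi$) is genuine but routine once the duplication formula is in hand; there is no missing idea in your plan. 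If you want a reference to check the details against, the computation is carried out in \cite{evans}; the present paper simply cites the result.
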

\begin{theorem}\cite[Thm. 2.5]{evans}\label{thm2}
Let $C\neq \phi$, $A\notin \{\varepsilon, C, C^2\}$, $x\neq 1$. Then
\begin{align}
{_{3}}F_2\left(\begin{array}{ccc}
                A, & \overline{A}C^2, & C\phi\\
                   & C^2, & C
              \end{array}\mid x \right) &=-\frac{\overline{C}(x)\phi(1-x)}{q} \nonumber\\
              &+C(-1)A\overline{C}(4)A\overline{C}^2(1-x)
              \cdot\frac{J(\overline{A}C^2, A\overline{C})}{q^2 J(A, \overline{A}C)}
              \cdot g(A\overline{C}^2, \overline{A}C; 1-x)^2.\nonumber
              \end{align}
\end{theorem}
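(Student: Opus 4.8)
The plan is to read the left-hand ${_{3}}F_2$ as a finite-field Clausen square and then evaluate that square through Theorem \ref{thm1}. The backbone of the argument is the Clausen-type identity
$$
{_{3}}F_2\left(\begin{array}{ccc}
A, & \overline{A}C^2, & C\phi\\
& C^2, & C
\end{array}\mid x \right) + \frac{\overline{C}(x)\phi(1-x)}{q}
= C(-1)\overline{C}(4)\overline{A}(1-x)\frac{J(\overline{A}C^2,A\overline{C})}{J(A,\overline{A}C)}\cdot F^*\left(A,C;\frac{x}{x-1}\right)^2,
$$
after which the whole theorem is a matter of substituting the value of $F^*\left(A,C;\frac{x}{x-1}\right)$ supplied by Theorem \ref{thm1}.

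First I would prove the displayed identity by a direct character-sum computation. Expanding $F^*\left(A,C;\frac{x}{x-1}\right)$ through \eqref{eq3} and \eqref{eq4} and squaring produces a double sum over a pair of characters; rewriting each binomial coefficient as a Jacobi sum via \eqref{eq0}, setting $\rho$ equal to the product of the two summation characters, and summing out the remaining free character collapses the inner convolution to a single Jacobi-sum factor. This is the finite-field shadow of the quadratic transformation underlying Clausen's classical identity, and it turns the square into a single character sum in $\rho$, which I would match term-by-term against the defining sum \eqref{eq00} of the ${_{3}}F_2$; this matching is what forces the prefactor $C(-1)\overline{C}(4)\overline{A}(1-x)J(\overline{A}C^2,A\overline{C})/J(A,\overline{A}C)$. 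The additive constant $-\overline{C}(x)\phi(1-x)/q$ is the degenerate residue of the computation: it collects the normalization term $AB(-1)\overline{A}(x/4)/q$ that separates $F$ from $F^*$ in \eqref{eq4} together with the $\chi=\varepsilon$ evaluations governed by \eqref{eq01}. The hypotheses $A\notin\{\varepsilon,C,C^2\}$ and $C\neq\phi$ are precisely what keep every Jacobi sum in sight nondegenerate and prevent any further boundary term from surviving.

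With the identity in hand, the remaining step is mechanical. Since $A\notin\{\varepsilon,C,C^2\}$ gives $A\neq C$, Theorem \ref{thm1} applies for $x\notin\{0,1\}$ and yields $F^*\left(A,C;\frac{x}{x-1}\right)=\frac{A(2)A\overline{C}(1-x)}{q}g(A\overline{C}^2,\overline{A}C;1-x)$. Squaring gives the factor $\frac{A(4)A^2\overline{C}^2(1-x)}{q^2}g(A\overline{C}^2,\overline{A}C;1-x)^2$, and multiplying by the Clausen prefactor and collecting characters (using $A(4)\overline{C}(4)=A\overline{C}(4)$ and $\overline{A}(1-x)A^2\overline{C}^2(1-x)=A\overline{C}^2(1-x)$) reproduces exactly $C(-1)A\overline{C}(4)A\overline{C}^2(1-x)\frac{J(\overline{A}C^2,A\overline{C})}{q^2J(A,\overline{A}C)}g(A\overline{C}^2,\overline{A}C;1-x)^2$. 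Combined with the constant $-\overline{C}(x)\phi(1-x)/q$, this is the asserted right-hand side.

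The main obstacle is the Clausen reduction of the second paragraph. The difficulty is entirely combinatorial: collapsing the squared single-variable character sum onto the single sum that defines the ${_{3}}F_2$ demands the correct Jacobi-sum convolution, and the constant correction $-\overline{C}(x)\phi(1-x)/q$ must be extracted by isolating every term in which a summation character degenerates to $\varepsilon$ through \eqref{eq01}, rather than being discarded in the generic manipulation. Passing from Jacobi sums to Gauss sums via $J(\alpha,\beta)=G(\alpha)G(\beta)/G(\alpha\beta)$ and $G(\chi)G(\overline{\chi})=\chi(-1)q$ should make the bookkeeping in both steps manageable.
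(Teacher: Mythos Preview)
The paper does not prove this theorem at all: Theorem~\ref{thm2} is quoted verbatim from Evans and Greene \cite[Thm.~2.5]{evans} as a preliminary result in Section~2, with no proof supplied. There is therefore no ``paper's own proof'' to compare your proposal against.

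That said, your outline is essentially the route Evans and Greene themselves take. Their Theorem~2.5 in \cite{evans} is obtained exactly by combining the finite-field Clausen identity (which expresses the ${_{3}}F_2$ as a constant plus a multiple of $F^*(A,C;x/(x-1))^2$; this is the main theorem of \cite{evans1}) with the evaluation of $F^*$ in terms of $g$ given by Theorem~\ref{thm1}. So your two-step plan is the right one, and the algebraic simplification in your third paragraph is correct. The only caveat is that your second paragraph is a sketch rather than a proof: the Clausen reduction you describe---squaring the defining sum for $F^*$, reindexing by the product character, and collapsing via Jacobi/Gauss-sum identities---is genuinely delicate, and in \cite{evans1} it occupies several pages and passes through an intermediate quadratic transformation for $F^*$ before the square can be matched to the ${_{3}}F_2$. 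Your description of where the correction term $-\overline{C}(x)\phi(1-x)/q$ comes from is accurate in spirit, but actually isolating it requires careful tracking of all the degenerate characters, which is where the hypotheses $C\neq\phi$ and $A\notin\{\varepsilon,C,C^2\}$ enter one by one.
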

\begin{theorem}\cite[Thm. 1.2]{evans1}\label{thm3}
Let $R^2 \notin \{\varepsilon, C, C^2\}$. Then
$$F^*(R^2, C; x)=R(4) \frac{J(\phi, C\overline{R}^2)}{J(\overline{R}C, \overline{R}\phi)}\cdot
{_{2}}F_1\left(\begin{array}{cccc}
                R\phi, & R\\
                 & C
              \end{array}\mid x \right).$$
\end{theorem}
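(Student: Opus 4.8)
The plan is to expand both sides into sums over characters $\chi$, convert every binomial coefficient to Gauss sums, and match the two expansions term by term. Writing $G(\chi)$ for the Gauss sum and using $J(A,B)=G(A)G(B)/G(AB)$ together with $\eqref{eq0}$, one has ${A\choose B}=\frac{B(-1)}{q}\cdot\frac{G(A)G(\overline{B})}{G(A\overline{B})}$ whenever $A\neq B$. First I would apply this to the definition $\eqref{eq3}$ of $F(R^2,C;x)$: the two factors ${(R\chi)^2\choose\chi}$ and ${R^2\chi\choose C\chi}$ produce a $G(R^2\chi)$ in both numerator and denominator which cancels, leaving each summand proportional to
\begin{align}
\frac{G(R^2\chi^2)\,G(\overline{\chi})\,G(\overline{C}\overline{\chi})}{G(R^2\overline{C})}\cdot\chi\!\left(\tfrac{x}{4}\right).\notag
\end{align}
Next I would invoke the Hasse--Davenport product relation $G(R^2\chi^2)=R\chi(4)\,G(R\chi)G(R\chi\phi)/G(\phi)$ to split the squared Gauss sum, which introduces the factor $1/G(\phi)$.

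The key observation is that the $\chi$-dependent Gauss sums $G(R\chi)$, $G(R\chi\phi)$, $G(\overline{\chi})$, $G(\overline{C}\overline{\chi})$ so produced are exactly those arising from the factors ${R\phi\chi\choose\chi}$ and ${R\chi\choose C\chi}$ in the summand of ${_{2}}F_1\left(\begin{array}{cc}R\phi,&R\\&C\end{array}\mid x\right)$. Moreover the spurious factor $R\chi(4)=R(4)\chi(4)$ from Hasse--Davenport combines with the argument discrepancy $\chi(x/4)=\chi(x)\overline{\chi}(4)$, and since $\chi(4)\overline{\chi}(4)=1$ only $R(4)$ survives, free of $\chi$. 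Thus, away from the exceptional characters discussed below, each summand of $F(R^2,C;x)$ equals the matching summand of the ${_{2}}F_1$ times the single constant $R(4)\,G(R\phi)G(R\overline{C})/[G(\phi)G(R^2\overline{C})]$, which then factors out of the sum (the common prefactor $\frac{q}{q-1}$ cancelling).

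Next I would recast this constant in the claimed Jacobi-sum shape. Expanding $R(4)\,J(\phi,C\overline{R}^2)/J(\overline{R}C,\overline{R}\phi)$ via $J(A,B)=G(A)G(B)/G(AB)$ and cancelling the common $G(\phi C\overline{R}^2)$ reduces the target to $R(4)\,G(\phi)G(C\overline{R}^2)/[G(\overline{R}C)G(\overline{R}\phi)]$. Applying $G(\psi)G(\overline{\psi})=\psi(-1)q$ to the four pairs $(R\phi,\overline{R}\phi)$, $(R\overline{C},\overline{R}C)$, $(R^2\overline{C},C\overline{R}^2)$ and $(\phi,\phi)$ then turns my constant into the target, all the signs $\psi(-1)$ cancelling; the hypothesis $R^2\notin\{\varepsilon,C,C^2\}$ guarantees that each of $R\phi$, $R\overline{C}$, $R^2\overline{C}$ is nontrivial, so these identities apply.

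The main obstacle, and the reason the statement is phrased for $F^*$ rather than $F$, is the exceptional characters. The passage from ${A\choose B}$ to Gauss sums is valid only for $A\neq B$, and at the finitely many $\chi$ where one of the four binomial coefficients degenerates — where the special value $\eqref{eq01}$ must be used and a Gauss sum collapses to $G(\varepsilon)=-1$ — the two expansions no longer agree term by term. I would enumerate these $\chi$ (for $F$ the relevant ones are $\chi\in\{\varepsilon,\overline{R}^2,\overline{C}\}$, for the ${_{2}}F_1$ they are $\chi\in\{\varepsilon,\overline{C}\}$), evaluate each genuine summand directly from $\eqref{eq0}$ and $\eqref{eq01}$, and compute the resulting finite discrepancy between $F(R^2,C;x)$ and the constant multiple of the series. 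A direct check should show this discrepancy is exactly cancelled by the normalizing term $R^2C(-1)\overline{R}^2(x/4)/q$ added in the definition $\eqref{eq4}$ of $F^*$, so that $F^*(R^2,C;x)$ equals the asserted multiple of ${_{2}}F_1$. Verifying that this bookkeeping balances — and that $R^2\notin\{\varepsilon,C,C^2\}$ rules out any further degeneracy — is the delicate part of the argument.
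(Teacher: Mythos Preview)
The paper does not prove this statement at all: Theorem~\ref{thm3} is quoted verbatim from Evans and Greene \cite[Thm.~1.2]{evans1} and used as a black box, so there is no proof in the present paper to compare your attempt against.

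That said, your strategy---expanding both sides over characters, converting binomials to Gauss sums, and invoking the Hasse--Davenport product relation to split $G(R^2\chi^2)$ into $G(R\chi)G(R\chi\phi)$---is the natural route and is essentially how the result is proved in the original source \cite{evans1}. Your handling of the constant via $G(\psi)G(\overline{\psi})=\psi(-1)q$ and your identification of the exceptional characters that force the passage from $F$ to $F^*$ are both on point; the bookkeeping you flag as ``delicate'' is genuinely the crux, but it does balance under the stated hypothesis $R^2\notin\{\varepsilon,C,C^2\}$.

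It is worth noting that the one related result the paper \emph{does} prove, Proposition~\ref{propo1} (the excluded case $R^2=\varepsilon$), takes a different tack: rather than manipulating Gauss sums directly, it applies Greene's multiplication identity \cite[(4.21)]{greene} to rewrite ${\chi^2\choose\chi}$ and then invokes the reduction formula \cite[Thm.~3.15(v)]{greene} to collapse the resulting ${_{3}}F_2$ to a ${_{2}}F_1$. That argument avoids Hasse--Davenport entirely but relies on Greene's machinery; your approach is more self-contained at the cost of the exceptional-character bookkeeping.
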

We now prove a result similar to the above theorem.
\begin{proposition}\label{propo1}
We have
\begin{align}
F^*(\varepsilon, C; x)=\left\{
                 \begin{array}{ll}
                   {_{2}}F_1\left(\begin{array}{cccc}
                \phi, & \varepsilon\\
                 & C
              \end{array}\mid x \right), & \hbox{if $C\neq\varepsilon$;} \\
                   -(q-2)\cdot {_{2}}F_1\left(\begin{array}{cccc}
                \phi, & \varepsilon\\
                 & C
              \end{array}\mid x \right), & \hbox{if $C=\varepsilon$.}
                 \end{array}
               \right.
\end{align}
\end{proposition}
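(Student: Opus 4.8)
The plan is to read both sides directly off the definitions rather than to pass to a limit in Theorem \ref{thm3}, of which this proposition is precisely the excluded boundary case $R^2=\varepsilon$. Setting $A=\varepsilon$ in \eqref{eq3} gives
$$F(\varepsilon,C;x)=\frac{q}{q-1}\sum_{\chi}{\chi^2\choose\chi}{\chi\choose C\chi}\chi\left(\frac{x}{4}\right),$$
while by \eqref{eq00} the series on the right-hand side equals $\frac{q}{q-1}\sum_{\chi}{\phi\chi\choose\chi}{\chi\choose C\chi}\chi(x)$. The engine of the proof is the duplication identity
$${\chi^2\choose\chi}=\chi(4){\phi\chi\choose\chi},\qquad \chi\neq\varepsilon,$$
which I would obtain by writing each binomial coefficient through \eqref{eq0} as a Jacobi sum, converting to Gauss sums, and applying the Hasse--Davenport product relation $G(\chi)G(\phi\chi)=\chi^{-2}(2)G(\chi^2)G(\phi)$; the subcases $\chi\neq\phi$ and $\chi=\phi$ are checked the same way. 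Since $\chi(4)\cdot\chi(x/4)=\chi(x)$, this identity forces the $\chi$-summand of $F(\varepsilon,C;x)$ to coincide with that of the ${_2}F_1$ for every nontrivial $\chi$.

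Consequently $F(\varepsilon,C;x)$ and the ${_2}F_1$ differ only in their $\chi=\varepsilon$ terms, which I would evaluate explicitly. There ${\chi^2\choose\chi}$ and ${\phi\chi\choose\chi}$ specialize to ${\varepsilon\choose\varepsilon}=(q-2)/q$ and ${\phi\choose\varepsilon}=-1/q$ by \eqref{eq01}, while ${\varepsilon\choose C}=\frac{C(-1)}{q}J(\varepsilon,\overline{C})$ is read off from \eqref{eq0} and \eqref{lemma11}: the Jacobi sum $J(\varepsilon,\overline{C})$ equals $-1$ when $C\neq\varepsilon$ and $q-2$ when $C=\varepsilon$. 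Adding the normalization term from \eqref{eq4}, which for $A=\varepsilon$ reduces to $C(-1)/q$ when $x\neq0$, then produces $F^*(\varepsilon,C;x)$.

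For $C\neq\varepsilon$ the second binomial ${\chi\choose C\chi}$ is regular at $\chi=\varepsilon$, so the lone discrepancy coming from ${\varepsilon\choose\varepsilon}$ versus ${\phi\choose\varepsilon}$ is cancelled exactly by the normalization term $C(-1)/q$, yielding $F^*(\varepsilon,C;x)={_2}F_1$ with no extra factor (consistent with the fact that the Jacobi-sum prefactor of Theorem \ref{thm3}, namely $J(\phi,C)/J(C,\phi)$, is $1$ by symmetry). The main obstacle is the case $C=\varepsilon$: now the second factor ${\chi\choose C\chi}={\chi\choose\chi}$ also degenerates at $\chi=\varepsilon$, so in $F$ both factors of the $\chi=\varepsilon$ term take the large value $(q-2)/q$ whereas in the ${_2}F_1$ only the second does. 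Carefully collecting these doubly-degenerate contributions together with the normalization term is where the additional scalar $-(q-2)$ must be produced, and verifying that this constant emerges cleanly—rather than as an inhomogeneous remainder—is the delicate bookkeeping step that has to be carried out.
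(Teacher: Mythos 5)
Your treatment of the case $C\neq\varepsilon$ is correct, and at bottom it rests on the same identity the paper uses: your duplication formula ${\chi^2 \choose \chi}=\chi(4){\phi\chi \choose \chi}$ for $\chi\neq\varepsilon$ is exactly \eqref{eq119}, since for $\chi\neq\varepsilon$ the factor ${\chi \choose \chi}{\phi \choose \phi}^{-1}$ appearing there equals $1$ by \eqref{eq01}. The difference is one of route: the paper gets the identity from Greene's product formula (4.21) and then funnels the bookkeeping through a ${_{3}}F_2$ and a reduction formula (its \eqref{eq112}), whereas you derive it from Hasse--Davenport and compare the two character sums term by term, isolating $\chi=\varepsilon$. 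Your accounting for $C\neq\varepsilon$ (the $\chi=\varepsilon$ discrepancy equals ${\varepsilon \choose C}=-\frac{C(-1)}{q}$, which the normalization term of \eqref{eq4} cancels exactly) is complete, and is arguably more self-contained than the paper's.

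The genuine gap is the case $C=\varepsilon$, which you explicitly defer as ``delicate bookkeeping'' to be carried out. That bookkeeping cannot be completed, because the $C=\varepsilon$ clause of Proposition \ref{propo1} is false. Finishing your own scheme: for $C=\varepsilon$ the $\chi=\varepsilon$ term of $F$ is ${\varepsilon \choose \varepsilon}^2=\left(\frac{q-2}{q}\right)^2$ while that of the ${_{2}}F_1$ is ${\phi \choose \varepsilon}{\varepsilon \choose \varepsilon}=-\frac{q-2}{q^2}$, so
\begin{align}
F(\varepsilon,\varepsilon;x)-{_{2}}F_1=\frac{q}{q-1}\cdot\frac{q-2}{q}\cdot\frac{q-1}{q}=\frac{q-2}{q},
\qquad
F^*(\varepsilon,\varepsilon;x)={_{2}}F_1+\frac{q-1}{q},\notag
\end{align}
an inhomogeneous relation rather than a rescaling. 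Moreover, inserting ${\chi \choose \chi}=-\frac{1}{q}+\frac{q-1}{q}\delta(\chi)$ into \eqref{eq00} and evaluating $\sum_{\chi}{\phi\chi \choose \chi}\chi(x)$ by orthogonality gives, for $x\neq 0,1$,
\begin{align}
{_{2}}F_1\left(\begin{array}{cc}
                \phi, & \varepsilon\\
                 & \varepsilon
              \end{array}\mid x \right)=-\frac{1+\phi(1-x)}{q},\notag
\end{align}
so that $F^*(\varepsilon,\varepsilon;x)=\frac{q-2-\phi(1-x)}{q}$, while the Proposition claims $-(q-2)\,{_{2}}F_1=\frac{(q-2)(1+\phi(1-x))}{q}$; these agree only when $\phi(1-x)=0$, i.e.\ $x=1$. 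Concretely, for $q=3$ and $x=2$ one computes $F^*(\varepsilon,\varepsilon;2)=\frac{2}{3}$ but $-(q-2)\,{_{2}}F_1=0$. The paper's own proof breaks at the same point: passing to \eqref{eq112} replaces the correct coefficient $-\frac{1}{q}$ of the ${_{2}}F_1$ by ${C \choose C}$ and the correct term ${\phi \choose \varepsilon}{\varepsilon \choose C}$ by $-\frac{C(-1)}{q}{\phi \choose \varepsilon}$, substitutions valid only for $C\neq\varepsilon$. Since the paper only ever applies the Proposition with $C=S^{-2i}$ nontrivial (Theorem \ref{thm22} with $l=3$, and Section 4), nothing downstream is affected; but your proof, like the paper's, establishes only the $C\neq\varepsilon$ clause, and the $C=\varepsilon$ clause should be replaced by the identity $F^*(\varepsilon,\varepsilon;x)={_{2}}F_1+\frac{q-1}{q}$ displayed above.
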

\begin{proof} We prove the result following the technique used in \cite{evans1}. Putting $B=\varepsilon$ in the relation \cite[(4.21)]{greene}, we have
\begin{align}\label{eq119}
{\chi^2 \choose \chi}={\phi \chi \choose \chi}{\chi \choose \chi}{\phi \choose \phi }^{-1}\chi(4).
\end{align}
From \eqref{eq119} and \eqref{eq3}, we obtain
\begin{align}\label{eq111}
F(\varepsilon,C;x)&=\frac{q}{q-1}\sum_{\chi}{\chi^2 \choose \chi}{\chi \choose C\chi}\chi\left(\frac{x}{4}\right)\notag\\
&=\frac{q}{q-1}\sum_{\chi}{\chi \choose C\chi}{\phi\chi \choose \chi}{\chi \choose \chi}
{\phi \choose \phi}^{-1}\chi(4)\chi\left(\frac{x}{4}\right)\notag\\
&={\phi \choose \phi}^{-1}{_{3}}F_2\left(\begin{array}{ccc}
                \phi, & \varepsilon, & \varepsilon\\
                   & C, & \varepsilon
              \end{array}\mid x \right).
\end{align}
By \cite[Thm. 3.15 (v)]{greene}, \eqref{eq111} reduces to
\begin{align}\label{eq112}
{\phi \choose \phi}F(\varepsilon,C;x)={C \choose C}{_{2}}F_1\left(\begin{array}{cccc}
                \phi, & \varepsilon\\
                 & C
              \end{array}\mid x \right)-\frac{C(-1)}{q}{\phi \choose \varepsilon}.
\end{align}
From \eqref{eq4}, we have
\begin{align}\label{eq113}
{\phi \choose \phi}F(\varepsilon,C;x)&={\phi \choose \phi}F^*(\varepsilon,C;x)-\frac{C(-1)}{q}{\phi \choose \phi}.
\end{align}
Comparing equations \eqref{eq112} and \eqref{eq113}, we obtain
\begin{align}
F^*(\varepsilon,C;x)={C \choose C}{\phi \choose \phi}^{-1}{_{2}}F_1\left(\begin{array}{cccc}
                \phi, & \varepsilon\\
                 & C
              \end{array}\mid x \right).
\end{align}
Using \eqref{eq01}, we complete the proof of the result.
\end{proof}
The following result is due to J. Greene.
\begin{theorem}\cite[Thm. 4.4 (i) \& (ii)]{greene}\label{thm7}
For $x\in \mathbb{F}_q$,
\begin{align}
\hspace{-3cm}(i)\hspace{1cm} {_{2}}F_1\left(\begin{array}{cccc}
                A, & B\\
                 & C
              \end{array}\mid x \right)&=A(-1){_{2}}F_1\left(\begin{array}{cccc}
                A, & B\\
                 & AB\overline{C}
              \end{array}\mid 1-x \right)\notag\\
              &\hspace{.5cm}+A(-1)\displaystyle{B \choose \overline{A}C}
              \delta(1-x)-\displaystyle{B \choose C}\delta(x),\notag\\
\hspace{-3cm}(ii)\hspace{1cm} {_{2}}F_1\left(\begin{array}{cccc}
                A, & B\\
                 & C
              \end{array}\mid x \right)&=C(-1)\overline{A}(1-x){_{2}}F_1\left(\begin{array}{cccc}
                A, & C\overline{B}\\
                 & C
              \end{array}\mid \frac{x}{x-1} \right)\notag\\
              &\hspace{.5cm}+A(-1)\displaystyle{B \choose \overline{A}C}\delta(1-x),\notag
\end{align}
where $\delta$ is the function $\delta(0)=1$ and $\delta(x)=0$ for $x\neq 0$.
\end{theorem}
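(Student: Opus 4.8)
The four series in the statement have arguments $\frac43,-\frac13,4,\frac14$, which form a single orbit under the classical substitutions $x\mapsto 1-x$ and $x\mapsto\frac{x}{x-1}$: concretely $-\frac13=1-\frac43$, $4=\frac{4/3}{4/3-1}$ and $\frac14=\frac{-1/3}{-1/3-1}$. Greene's transformation laws in Theorem~\ref{thm7} convert these substitutions into identities between contiguous ${_2}F_1$'s, so the plan is to prove part~(i) directly and then read off (ii), (iii) and (iv) from it. The engine for (i) is the normalized sum $F^*(S^{-3},S^{-2};\,\cdot\,)$: I will compute it in two ways, once in terms of the sought ${_2}F_1$ and once in terms of the character sum $g(S,S;-3)$, and then evaluate $g(S,S;-3)$ by squaring.

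For the first expression I apply Theorem~\ref{thm3} with $R^2=S^{-3}$ and $C=S^{-2}$. This is legitimate: since $S$ is a square the character $R=\sqrt{S^{-3}}$ exists, and the order hypothesis on $S$ (order $\neq 1,3$) gives $R^2=S^{-3}\notin\{\varepsilon,C,C^2\}$. Because $C\overline R^2=S$, $\overline R C=\sqrt{S^{-1}}$ and $\overline R\phi=\sqrt{S^3}\phi$, Theorem~\ref{thm3} exhibits $F^*(S^{-3},S^{-2};x)$ as $\sqrt{S^{-3}}(4)\,\frac{J(\phi,S)}{J(\sqrt{S^{-1}},\sqrt{S^3}\phi)}$ times exactly the series in part~(i). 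For the second expression I apply Theorem~\ref{thm1} with $A=S^{-3}$, $C=S^{-2}$ and $x=4$, so that $\frac{x}{x-1}=\frac43$ and $1-x=-3$; since $A\overline C^2=S$ and $\overline A C=S$ this gives $F^*(S^{-3},S^{-2};\frac43)=\frac{S^{-3}(2)S^{-1}(-3)}{q}\,g(S,S;-3)$. Using $\sqrt{S^{-3}}(4)=S^{-3}(2)$ the factor $S^{-3}(2)$ cancels when the two expressions are equated, and part~(i) reduces to $\,{_2}F_1(\cdots\mid\frac43)=\frac{J(\sqrt{S^{-1}},\sqrt{S^3}\phi)}{J(\phi,S)}\cdot\frac{S^{-1}(-3)}{q}\,g(S,S;-3)$.

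It remains to evaluate $g(S,S;-3)$. Feeding $A=S^{-3}$, $C=S^{-2}$, $x=4$ into Theorem~\ref{thm2} writes the very series of Theorem~\ref{thm5} as its degenerate term $-\frac{\phi(-3)S(16)}{q}$ plus an explicit nonzero multiple of $g(S,S;-3)^2$; comparing with the value supplied by Theorem~\ref{thm5} and cancelling the common Jacobi factor $\frac{J(S^{-1},S^{-1})}{J(S^{-3},S)}$ yields $g(S,S;-3)^2=q^2\,S(\tfrac{8}{9})^2\left[{S \choose \chi_3}+{S \choose \chi_3^2}\right]^2$ when $q\equiv1\pmod3$, and $g(S,S;-3)^2=0$ when $q\equiv2\pmod3$. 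In the latter case $g(S,S;-3)=0$, so the identity of the previous paragraph forces all four series to vanish, which is the stated $q\equiv2$ alternative. In the former case one extracts $g(S,S;-3)=S(-1)\,q\,S(\tfrac{8}{9})\left[{S \choose \chi_3}+{S \choose \chi_3^2}\right]$; substituting into the displayed identity and using $S^{-1}(-3)S(-1)S(\tfrac{8}{9})=S(\tfrac{8}{27})$ produces exactly the constant $S(\tfrac{8}{27})\frac{J(\sqrt{S^{-1}},\sqrt{S^3}\phi)}{J(\phi,S)}$ of part~(i).

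Finally I would deduce (ii)--(iv) from (i) by Theorem~\ref{thm7}, at each step checking that the $\delta$-terms drop out because the arguments lie outside $\{0,1\}$. Part~(ii) comes from Theorem~\ref{thm7}(i): the lower character becomes $AB\overline C=S^{-1}\phi$, the argument becomes $1-\frac43=-\frac13$, and the prefactor $A(-1)=\sqrt{S}\phi(-1)$ accounts precisely for the factor $\frac{1}{\sqrt{S}\phi(-1)}$ in (ii). Parts~(iii) and~(iv) come from Theorem~\ref{thm7}(ii): starting from (i) the second upper parameter $C\overline B$ becomes $\sqrt{S^{-1}}$ and the argument becomes $4$, giving (iii) with prefactor $S^{-2}(-1)\sqrt{S^3}\phi(-\frac13)$; starting from (ii) it becomes $\sqrt{S}\phi$ with argument $\frac14$, giving (iv). The main obstacle is the single genuinely nontrivial point above, namely fixing the sign of the square root $g(S,S;-3)$ when $q\equiv1\pmod3$, since Theorem~\ref{thm5} determines only $g(S,S;-3)^2$. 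I would resolve it by evaluating the defining sum $g(S,S;-3)=\sum_{t}S(1-t)S(1+3t^2)$ directly: the factorization $(1-t)(1+3t^2)=-3(t-1)(t^2+\frac13)$ recasts it, up to the scalar $S(-3)$, as the $S$-twisted count of the curve $C_{l,1/3}$ appearing in \eqref{eq11}, and a direct evaluation of this sum through the Jacobi sums attached to the cubic character $\chi_3$ delivers both the magnitude matching Theorem~\ref{thm5} and the sign $S(-1)$.
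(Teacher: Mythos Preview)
Your proposal does not address the stated Theorem~\ref{thm7} at all. Theorem~\ref{thm7} is Greene's general transformation law for ${_2}F_1$ over $\mathbb{F}_q$ under $x\mapsto 1-x$ and $x\mapsto \frac{x}{x-1}$; it is quoted from \cite{greene} as a preliminary result and the paper offers no proof of it to compare against. What you have actually sketched is a proof of Theorem~\ref{thm8}, the evaluation of four specific ${_2}F_1$'s at the arguments $\frac43,-\frac13,4,\frac14$, in which Theorem~\ref{thm7} is used as a tool rather than established.

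Assuming the intended target was Theorem~\ref{thm8}: your derivation of (ii)--(iv) from (i) via Theorem~\ref{thm7} is exactly the paper's argument. For (i), your route through $F^*(S^{-3},S^{-2};\frac43)$ via Theorems~\ref{thm1} and~\ref{thm3} is likewise the content of the paper's Lemma~\ref{lemma4} specialized to $\lambda=\frac13$. The divergence is in how you evaluate $g(S,S;-3)$: you invoke Theorem~\ref{thm5} to obtain $g(S,S;-3)^2$ and are then left with a sign ambiguity, which you propose to settle by a direct Jacobi-sum evaluation of $\sum_{x}S((x-1)(x^2+\tfrac13))$. But that direct evaluation is precisely Lemma~\ref{lemma3}, and since $g(S,S;-3)=S(-3)\sum_{x}S\bigl((x-1)(x^2+\tfrac13)\bigr)$ it already delivers $g(S,S;-3)$ itself, not merely its square. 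The paper therefore just combines Lemma~\ref{lemma3} with Lemma~\ref{lemma4} and is done; your passage through Theorem~\ref{thm5} is an unnecessary detour, and the square-root extraction is the one place your argument is genuinely incomplete as written.
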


\section{\bf Proof of results}
\begin{pf}{\bf \ref{thm4}.}
Putting $A=S^i, B=S^i$ and $x= -\frac{1}{\lambda}$ in \eqref{eq5}, we obtain
\begin{align}
g\left(S^i, S^i; -\frac{1}{\lambda}\right)&=\sum_{t\in \mathbb{F}_q}S^{-i}(-\lambda)S^i((t-1)(t^2+\lambda))\notag
\end{align}
which gives
\begin{align}\label{eq6}
\sum_{t\in \mathbb{F}_q}S^i((t-1)(t^2+\lambda))&=S^{i}(-\lambda)g\left(S^i, S^i; -\frac{1}{\lambda}\right).
\end{align}
Moreover,
\begin{align}
&\#\{(x, y)\in \mathbb{F}_q^2: y^l=(x-1)(x^2+\lambda)\}\notag\\
&=\sum_{t\in\mathbb{F}_q}\#\{y\in\mathbb{F}_q: y^l=(t-1)(t^2+\lambda)\}\nonumber\\
&=\sum_{t\in\mathbb{F}_q, (t-1)(t^2+\lambda)\neq0}
\#\{y\in\mathbb{F}_q: y^l=(t-1)(t^2+\lambda)\}+\#\{t\in\mathbb{F}_q: (t-1)(t^2+\lambda)=0\}.\nonumber
\end{align}
Applying Lemma \ref{lemma1}, we obtain
\begin{align}
&\#\{(x, y)\in \mathbb{F}_q^2: y^l=(x-1)(x^2+\lambda)\}\nonumber\\
&=\sum_{t\in\mathbb{F}_q}\sum_{i=0}^{l-1}S^i((t-1)(t^2+\lambda))+
\#\{t\in\mathbb{F}_q: (t-1)(t^2+\lambda)=0\}\nonumber\\
&=q+\sum_{t\in\mathbb{F}_q}\sum_{i=1}^{l-1}S^i((t-1)(t^2+\lambda)).\nonumber
\end{align}
Since ord$_p(\lambda(\lambda+1))=0$, \eqref{eq11} yields
\begin{align}\label{eq13}
-a_q(C_{l, \lambda})=\sum_{i=1}^{l-1}\sum_{t\in\mathbb{F}_q}S^i((t-1)(t^2+\lambda)).
\end{align}
Squaring both sides of \eqref{eq13}, we obtain
\begin{align}
a_q(C_{l, \lambda})^2=\sum_{i=1}^{l-1}\left[\sum_{t\in\mathbb{F}_q}S^i((t-1)(t^2+\lambda))\right]^2
+\sum_{i,j=1,i\neq j}^{l-1}\sum_{t\in\mathbb{F}_q}S^{i+j}((t-1)(t^2+\lambda)).\notag
\end{align}
Again using \eqref{eq6} and \eqref{lemma11}, we deduce that
\begin{align}\label{eq7}
a_q(C_{l, \lambda})^2=\sum_{i=1}^{l-1}S^i(\lambda^2)g
\left(S^i, S^i; -\frac{1}{\lambda}\right)^2&+2(q-1)\cdot\lfloor\frac{l-1}{2}\rfloor\notag\\&+\sum_{i,j=1,i\neq j, i+j\neq l}^{l-1}\sum_{t\in\mathbb{F}_q}S^{i+j}((t-1)(t^2+\lambda)).
\end{align}
Since $3\nmid l$ or $4\nmid l$, taking $A=S^{-3i}$, $C=S^{-2i}$ and
$x=\frac{1+\lambda}{\lambda}$ in Theorem \ref{thm2}, we obtain
\begin{align}\label{eq001}
g\left(S^i, S^i; -\frac{1}{\lambda}\right)^2&=q^2\cdot\frac{S^i(-4\lambda)J(S^{-3i}, S^{i})}{J(S^{-i}, S^{-i})}
\cdot {_{3}}F_2\left(\begin{array}{cccc}
                S^{-3i}, & S^{-i}, & S^{-2i}\phi\\
                   & S^{-4i}, & S^{-2i}
              \end{array}\mid \frac{1+\lambda}{\lambda} \right)\notag\\
              &\hspace{.5cm}+q\cdot
              \frac{\phi(-\lambda)S^i(-\frac{4(1+\lambda)^2}{\lambda})J(S^{-3i}, S^{i})}{J(S^{-i}, S^{-i})}.
\end{align}
Now we find the value of $$\sum_{i,j=1,i\neq j,i+j\neq l}^{l-1}\sum_{t\in\mathbb{F}_q}S^{i+j}((t-1)(t^2+\lambda)).$$
Let $P(i_k)$ be the set of all possible values of $i$ such that $i+j\equiv k~($mod $l)$, $1\leq i,j \leq l-1$ and $i\neq j$. Then for odd values of $l$ $$\#P(i_k)=l-3$$ and for even values of $l$ $$\#P(i_k)=\left\{
                                                                         \begin{array}{ll}
                                                                           l-2, & \hbox{if $k$ is odd;} \\
                                                                           l-4, & \hbox{if $k$ is even.}
                                                                         \end{array}
                                                                       \right.$$
Therefore,
\begin{align}
&\sum_{i,j=1,i\neq j,i+j\neq l}^{l-1}\sum_{t\in\mathbb{F}_q}S^{i+j}((t-1)(t^2+\lambda))\notag\\&=\left\{
                                                                               \begin{array}{ll}
                                                  (l-3)\displaystyle\sum_{i=1}^{l-1}\sum_{t\in\mathbb{F}_q}S^i((t-1)(t^2+\lambda)), & \hbox{if $l$ is odd;} \\
                                                  (l-2)\displaystyle\sum_{i=1}^{l-1}\sum_{t\in\mathbb{F}_q}S^i((t-1)(t^2+\lambda)) - 2\sum_{i=1}^{\frac{l}{2}-1}\sum_{t\in \mathbb{F}_q}S^{2i}((t-1)(t^2+\lambda)), & \hbox{if $l$ is even.}
                                                                               \end{array}
                                                                             \right.
\end{align}
From \eqref{eq13}, \eqref{eq5} and Theorem \ref{thm1}, we deduce that
\begin{align}\label{eq002}
&\sum_{i,j=1,i\neq j,i+j\neq l}^{l-1}\sum_{t\in\mathbb{F}_q}S^{i+j}((t-1)(t^2+\lambda))\notag\\&=\left\{
                                                                                                   \begin{array}{ll}
                                                                                                    -(l-3)a_q(C_{l,\lambda}), & \hbox{if $l$ is odd;}\\
                                                                                                     -(l-2)a_q(C_{l,\lambda})-2q\cdot \displaystyle\sum_{i=1}^{\frac{l}{2}-1}\frac{J(\phi,S^{2i})}{J(S^{-i},S^{3i}\phi)}\cdot{_{2}}F_1\left(\begin{array}{cc}
                S^{-3i}\phi, & S^{-3i}\\
                   & S^{-4i}
              \end{array}\mid 1+\lambda \right), & \hbox{if $l$ is even.}
                                                                                                   \end{array}
                                                                                                 \right.
\end{align}
Using \eqref{eq001} and \eqref{eq002} in \eqref{eq7}, we complete the proof.
\end{pf}
\begin{remark}
Putting $l=2$ in Theorem \ref{thm4}, we obtain
\begin{align}\label{eq33}
a_q(C_{2,\lambda})^2=q^2\phi(-\lambda)\cdot{_{3}}F_2\left(\begin{array}{cccc}
                \phi, & \phi, & \phi\\
                   & \varepsilon, & \varepsilon
              \end{array}\mid \frac{1+\lambda}{\lambda} \right)+q,
\end{align}
which yields \eqref{eq122} over $\mathbb{F}_q$.
\end{remark}
\begin{pf}{\bf \ref{thm22}.}
Following the proof of Theorem \ref{thm4}, we obtain
\begin{align}\label{eq8}
\sum_{t\in \mathbb{F}_q}S^i((t-1)(t^2+\lambda))&=S^{i}(-\lambda)g\left(S^i, S^i; -\frac{1}{\lambda}\right)
\end{align}
and
\begin{align}\label{eq9}
\#\{(x, y)\in \mathbb{F}_q^2: y^l=(x-1)(x^2+\lambda)\}&=q+\sum_{t\in\mathbb{F}_q}\sum_{i=1}^{l-1}S^i((t-1)(t^2+\lambda)).
\end{align}
Since $S^i\neq \varepsilon$, we have $S^{-2i}\neq S^{-3i}$. Putting $A=S^{-3i},~C=S^{-2i}$
and $x=\frac{1+\lambda}{\lambda}$ in Theorem \ref{thm1}, we deduce that
\begin{align}\label{eq116}
g(S^i, S^i; -\frac{1}{\lambda})=q S^i(-\frac{8}{\lambda})F^*(S^{-3i}, S^{-2i}; 1+\lambda).
\end{align}
As $\frac{q-1}{l}$ is even, $S^i$ is a square. Also, $3\nmid l$ implies that $S^i\neq\varepsilon$.
So applying Theorem \ref{thm3}, we obtain
\begin{align}\label{eq10}
g(S^i, S^i; -\frac{1}{\lambda})=\frac{q S^i(-\frac{1}{\lambda})
J(\phi, S^{i})}{J(\sqrt{S^{-i}}, \sqrt{S^{3i}}\phi)}\cdot{_{2}}F_1\left(\begin{array}{cccc}
                \sqrt{S^{-3i}}\phi, & \sqrt{S^{-3i}}\\
                 & S^{-2i}
              \end{array}\mid 1+\lambda \right).
\end{align}
From \eqref{eq8}, \eqref{eq9}, and \eqref{eq10}, we have
\begin{align}
\#\{(x, y)\in \mathbb{F}_q^2&: y^l=(x-1)(x^2+\lambda)\}\notag\\&=q+q\cdot\sum_{i=1}^{l-1}
\frac{J(\phi, S^{i})}{J(\sqrt{S^{-i}}, \sqrt{S^{3i}}\phi)}\cdot{_{2}}F_1\left(\begin{array}{cccc}
                \sqrt{S^{-3i}}\phi, & \sqrt{S^{-3i}}\\
                 & S^{-2i}
              \end{array}\mid 1+\lambda \right).
\end{align}
Since ord$_p(\lambda(\lambda+1))=0$, \eqref{eq11} completes the proof of \eqref{eq114}.

Again for $l=3$, using Proposition \ref{propo1} in \eqref{eq116} and then combining with \eqref{eq8} and \eqref{eq9}, we obtain
\begin{align}
\#\{(x, y)\in \mathbb{F}_q^2&: y^l=(x-1)(x^2+\lambda)\}=q+
                           q\cdot\displaystyle\sum_{i=1}^{2}{_{2}}F_1\left(\begin{array}{cccc}
                \phi, & \varepsilon\\
                 & S^i
              \end{array}\mid 1+\lambda \right)\notag
\end{align}
which yields the result because of \eqref{eq12}.
\end{pf}
\begin{corollary}
Let $p$ is an odd prime for which \emph{ord}$_p(\lambda(\lambda+1))=0$. If $p\equiv 1~($\emph{mod} $3)$ and $x^2+3y^2=p$, then
\begin{align}
a_p(C_{3,-\frac{1}{2}})=\phi(2)(-1)^{x+y-1}\left(\frac{x}{3}\right)\cdot 2x\notag
\end{align}
and
\begin{align}
p\cdot\displaystyle\sum_{i=1}^{2}{_{2}}F_1\left(\begin{array}{cccc}
                \phi, & \varepsilon\\
                 & \chi_3^i
              \end{array}\mid \frac{1}{2} \right)=\phi(2)(-1)^{x+y}\left(\frac{x}{3}\right)\cdot 2x-2,\notag
\end{align}
where $\chi_3$ is a character on $\mathbb{F}_p$ of order $3$.
\end{corollary}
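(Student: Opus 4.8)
The plan is to evaluate $a_p(C_{3,-\frac{1}{2}})$ by passing to a standard CM elliptic curve, and then to read off the hypergeometric identity directly from the already-proved formula \eqref{eq115}. First I would specialize Remark \ref{remark1} to $\lambda=-\frac{1}{2}$. Since $1+\lambda=\frac{1}{2}$ and $-\lambda=\frac{1}{2}$, the isomorphism in that remark identifies $C_{3,-\frac{1}{2}}$ over $\mathbb{Q}$ with the elliptic curve $E: y^2=x^3+8$. As $\lambda(\lambda+1)=-\frac{1}{4}$, every odd prime $p\neq 3$ is of good reduction, and since $a_p$ is an isomorphism invariant we get $a_p(C_{3,-\frac{1}{2}})=a_p(E)$.

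Next I would recognize $E$ as a quadratic twist. Writing $8=2^{3}$, the map $(x,y)\mapsto(2x,\,2\sqrt{2}\,y)$ carries $E_1: y^2=x^3+1$ to $E$, so $E$ is precisely the quadratic twist of $E_1$ by $2$ (in general, twisting $y^2=x^3+D$ by $d$ gives $y^2=x^3+d^{3}D$). Hence, for $p\neq 2$,
\begin{equation*}
a_p(E)=\phi(2)\,a_p(E_1),
\end{equation*}
where $\phi$ is the quadratic character. This reduces the problem to a single classical quantity, the trace of Frobenius of $y^2=x^3+1$.

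The heart of the argument, and the step I expect to be the main obstacle, is the explicit evaluation of $a_p(E_1)$ in terms of the representation $p=x^2+3y^2$ (which exists because $p\equiv 1\pmod 3$). Here I would invoke the classical formula
\begin{equation*}
a_p(E_1)=(-1)^{x+y-1}\left(\frac{x}{3}\right)\cdot 2x,
\end{equation*}
which comes from the Jacobi/Gauss-sum description of the zeta function of the $j=0$ curve $y^2=x^3+1$. The delicate point is the sign normalization. One first checks that $(-1)^{x+y-1}\left(\frac{x}{3}\right)x$ is independent of the choice of signs of $x$ and $y$: replacing $x$ by $-x$ flips both the Legendre symbol $\left(\frac{x}{3}\right)$, since $\left(\frac{-1}{3}\right)=-1$, and the factor $x$, while leaving $(-1)^{x+y-1}$ unchanged, and similarly $y\mapsto -y$ changes nothing; so the right-hand side is well defined. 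Pinning down the sign itself is the part that requires care and the standard Gauss-sum bookkeeping for curves of $j$-invariant $0$. Combining this with the twist relation gives the first displayed identity of the corollary.

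Finally I would obtain the second identity at no extra cost. Specializing \eqref{eq115} to $l=3$, $\lambda=-\frac{1}{2}$ (so $1+\lambda=\frac{1}{2}$ and $S=\chi_3$) yields
\begin{equation*}
p\cdot\sum_{i=1}^{2}{_{2}}F_1\left(\begin{array}{cc}\phi,&\varepsilon\\&\chi_3^i\end{array}\mid \frac{1}{2}\right)=-a_p(C_{3,-\frac{1}{2}})-2.
\end{equation*}
Substituting the first identity and using $-(-1)^{x+y-1}=(-1)^{x+y}$ turns the right-hand side into $\phi(2)(-1)^{x+y}\left(\frac{x}{3}\right)\cdot 2x-2$, exactly as claimed. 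As a sanity check on the normalization one can take $p=7=2^{2}+3\cdot 1^{2}$: direct counting gives $a_7(E_1)=-4$ and $\phi(2)=1$, in agreement with both displayed formulas.
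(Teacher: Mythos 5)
Your proposal is correct and follows essentially the same route as the paper: identify $C_{3,-\frac{1}{2}}$ with $y^2=x^3+8$ via Remark \ref{remark1}, recognize it as the quadratic twist by $2$ of $y^2=x^3+1$, invoke the classical evaluation of $a_p(y^2=x^3+1)$ in terms of $p=x^2+3y^2$ (the paper cites this as Proposition 2 of \cite{ono}), and then substitute into \eqref{eq115}. The only cosmetic difference is that you gesture at the Gauss-sum derivation of the classical formula where the paper simply cites Ono.
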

\begin{proof}
As mentioned in Remark \ref{remark1}, $C_{3,-\frac{1}{2}}$ is isomorphic over $\mathbb{Q}$ to the elliptic curve $y^2=x^3+2^3$, which is $2$-quadratic twist of $y^2=x^3+1$. It is known that if $E(d)$ is the $d$-quadratic twist of the elliptic curve $E$ and gcd$(p,d)=1$, then $$a_p(E)=\phi(d)a_p(E(d)).$$ Since gcd$(p,2)=1$, hence by \cite[Proposition 2]{ono}, we have $$a_p(C_{3,-\frac{1}{2}})=\phi(2)(-1)^{x+y-1}\left(\frac{x}{3}\right)\cdot 2x.$$
\par Again combining this result with the equation \eqref{eq115}, we complete the second part of the corollary.
\end{proof}
\begin{corollary}
Let $p$ be an odd prime for which \emph{ord}$_p(\lambda(\lambda+1))=0$. If $q = p^e \equiv 1~($\emph{mod} $4)$, then
\begin{align}
{_{3}}F_2\left(\begin{array}{cccc}
                \phi, & \phi, & \phi\\
                   & \varepsilon, & \varepsilon
              \end{array}\mid \frac{1+\lambda}{\lambda} \right)=\phi(\lambda){_{2}}F_1\left(\begin{array}{cccc}
                \overline{\chi_4}, & \chi_4\\
                 & \varepsilon
              \end{array}\mid 1+\lambda \right)^2-\frac{\phi(\lambda)}{q},\notag
\end{align}
where $\chi_4$ is a character of order $4$ on $\mathbb{F}_q$.
\end{corollary}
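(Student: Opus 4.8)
The plan is to produce two independent expressions for $a_q(C_{2,\lambda})^2$ and equate them. The first is simply the $l=2$ specialization of Theorem \ref{thm4}, already recorded in \eqref{eq33}:
$$a_q(C_{2,\lambda})^2=q^2\phi(-\lambda)\cdot{_{3}}F_2\left(\begin{array}{cccc}\phi,&\phi,&\phi\\&\varepsilon,&\varepsilon\end{array}\mid\frac{1+\lambda}{\lambda}\right)+q.$$
This already isolates the ${_3}F_2$ we must evaluate, so the whole problem reduces to computing $a_q(C_{2,\lambda})^2$ a second way in terms of the target ${_2}F_1$.

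For the second expression I would apply Theorem \ref{thm22} with $l=2$. Its hypotheses hold: $3\nmid 2$, and $\frac{q-1}{2}$ is even exactly because $q\equiv1\pmod4$; the same congruence provides a character $\chi_4$ of order $4$ with $\chi_4^2=\phi$, i.e. $\sqrt{\phi}=\chi_4$. With $S=\phi$ the sum in \eqref{eq114} has the single term $i=1$, in which $S^{2}=\varepsilon$ and, upon choosing $\sqrt{S^3}=\chi_4$, the numerator pair $(\sqrt{S^3}\phi,\sqrt{S^3})$ becomes $(\overline{\chi_4},\chi_4)$ since $\chi_4\phi=\chi_4^3=\overline{\chi_4}$. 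This reproduces precisely the ${_2}F_1$ of the statement, and one is left with
$$-a_q(C_{2,\lambda})=q\cdot\frac{J(\phi,\phi)}{J(\overline{\chi_4},\chi_4)}\cdot{_{2}}F_1\left(\begin{array}{cccc}\overline{\chi_4},&\chi_4\\&\varepsilon\end{array}\mid 1+\lambda\right).$$

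The decisive step is to evaluate the Jacobi factor. Because $\phi^{-1}=\phi$ and $\chi_4^{-1}=\overline{\chi_4}$, both sums are of the degenerate type $J(\chi,\chi^{-1})=-\chi(-1)$, so $J(\phi,\phi)=-\phi(-1)=-1$ and $J(\overline{\chi_4},\chi_4)=-\chi_4(-1)$. Hence the coefficient equals $\chi_4(-1)=\pm1$, and in particular its square is $1$. Squaring the previous display therefore collapses to
$$a_q(C_{2,\lambda})^2=q^2\cdot{_{2}}F_1\left(\begin{array}{cccc}\overline{\chi_4},&\chi_4\\&\varepsilon\end{array}\mid 1+\lambda\right)^2.$$

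Finally I would equate this with the first expression, divide by $q^2$, and solve for the series, obtaining $\phi(-\lambda)\cdot{_3}F_2={_2}F_1^2-\frac1q$ and hence, after multiplying by $\phi(-\lambda)$, the identity ${_3}F_2=\phi(-\lambda){_2}F_1^2-\frac{\phi(-\lambda)}{q}$. Since $q\equiv1\pmod4$ forces $\phi(-1)=1$, we have $\phi(-\lambda)=\phi(\lambda)$, which yields the asserted formula. The main obstacle I anticipate is the bookkeeping for the order-$4$ character: one must fix the square roots of $\phi$ consistently (as inherited from the proof of Theorem \ref{thm22}) so that the Jacobi factor is the degenerate $J(\overline{\chi_4},\chi_4)$ of absolute value $1$ rather than $J(\chi_4,\chi_4)$ of absolute value $\sqrt q$; it is exactly this cancellation of the spurious $\sqrt q$ that makes the coefficient square to $1$ and leaves the clean identity.
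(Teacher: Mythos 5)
Your proposal is correct and is essentially the paper's own proof: both specialize Theorem \ref{thm22} to $l=2$ (with $S=\phi$, $\sqrt{\phi}=\chi_4$), square to get $a_q(C_{2,\lambda})^2=q^2\cdot\frac{J(\phi,\phi)^2}{J(\chi_4,\overline{\chi_4})^2}\cdot{_{2}}F_1^2$, observe that the Jacobi factor squares to $1$, and compare with \eqref{eq33} using $\phi(-1)=1$. The only cosmetic difference is that you evaluate the degenerate sums directly via $J(\chi,\overline{\chi})=-\chi(-1)$, whereas the paper cites \eqref{eq01} for the same fact.
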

\begin{proof}
Putting $l=2$ in Theorem \ref{thm22} and then squaring both sides, we have
\begin{align}
a_q(C_{2,\lambda})^2=q^2\cdot\frac{J(\phi,\phi)^2}{J(\chi_4,\overline{\chi_4})^2}\cdot{_{2}}F_1\left(\begin{array}{cccc}
                \overline{\chi_4}, & \chi_4\\
                 & \varepsilon
              \end{array}\mid 1+\lambda \right)^2.\notag
\end{align}
Using \eqref{eq01} and then comparing with \eqref{eq33}, we complete the proof.
\end{proof}
\begin{pf}{\bf \ref{thm6}.}
Putting $\lambda=\frac{1}{3}$ in \eqref{eq1} and making the change of variables
$(x, y)\rightarrow (\frac{x}{9}+\frac{1}{3}, y)$, and then replacing $-\frac{x}{6}$ by $x$ we obtain the equivalent equation as
\begin{eqnarray*}
y^l=-\frac{8}{27}(1+x^3).
\end{eqnarray*}
Therefore,
\begin{align}
\#\{(x, y)\in \mathbb{F}_q^2: y^l=(x-1)(x^2+\frac{1}{3})\}
&=\#\{(x, y)\in \mathbb{F}_q^2: y^l=-\frac{8}{27}(1+x^3)\}\notag\\
&=q+\sum_{i=1}^{l-1}\sum_{x\in\mathbb{F}_q}S^i(-\frac{8}{27})S^i(1+x^3).\notag
\end{align}
Now recall that the binomial theorem (see \cite{greene}) for a character $A$ on $\mathbb{F}_q$
is given by
\begin{align}\label{eq121}
A(1+x)=\delta(x)+\frac{q}{q-1}\sum_\chi{A \choose \chi}\chi(x),
\end{align}
where $\delta(x)=1~($resp. $0)$
if $x=0~($resp. $x\neq 0).$ Hence
\begin{align}\label{eq120}
\#\{(x, y)\in \mathbb{F}_q^2: y^l=(x&-1)(x^2+\frac{1}{3})\}\notag\\
&=q+\sum_{i=1}^{l-1}\sum_{x\in\mathbb{F}_q}S^i(-\frac{8}{27})
\left[\delta(x^3)+\frac{q}{q-1}\sum_\chi{S^i \choose \chi}\chi(x^3)\right]\notag\\
&=q+\sum_{i=1}^{l-1}S^i(-\frac{8}{27})+\frac{q}{q-1}\sum_{i=0}^{l-1}S^i(-\frac{8}{27})
\sum_\chi{S^i \choose \chi}\sum_{x\in\mathbb{F}_q}\chi^3(x).
\end{align}
By \eqref{lemma11}, $\sum_{x\in\mathbb{F}_q}\chi^3(x)$ is nonzero if $\chi^3=\varepsilon$,
which is possible only for $\varepsilon$, $\chi_3$ and $\chi_3^2$. Therefore, \eqref{eq120} reduces to
\begin{align}
\#\{(x, y)\in \mathbb{F}_q^2: y^l=(x&-1)(x^2+\frac{1}{3})\}\notag\\
&=\left\{
\begin{array}{ll}
q, & \hbox{if $q\equiv 2$ $($mod $3)$;} \\
q+q\cdot\displaystyle\sum_{i=1}^{l-1}S^i(\frac{27}{8})
\left[{\chi_3 \choose S^i}+{\chi_3^2 \choose S^i}\right], & \hbox{if $q\equiv 1$ $($mod $3)$,}
\end{array}
\right.\notag
\end{align}
which completes the proof of the result because of \eqref{eq11} and \eqref{eq12}.
\end{pf}
We now give an alternate proof of a result of McCarthy.
\begin{pf}{\bf \ref{thm44}.}
Since $q\equiv 1~($mod $3)$, putting $l=2$ in Theorem \ref{thm6} we find that
\begin{align}
-a_q(C_{2,\frac{1}{3}})=q\phi(6)\left[{\chi_3 \choose \phi}+{\chi_3^2 \choose \phi}\right].\notag
\end{align}
Since $\phi(-3)=1$ if and only if $q\equiv 1~($mod $3)$ and
$\overline{{\chi_3 \choose \phi}}={\chi_3^2 \choose \phi}$, the first part follows.

Again the second part follows from the fact that if $\chi\psi$ is nontrivial, then
\begin{align}
J(\chi, \psi)=\frac{G(\chi)G(\psi)}{G(\chi\psi)},\notag
\end{align}
where $J(\chi, \psi)$ and $G(\chi)$ are Jacobi and Gauss sums respectively.
\end{pf}
Simplifying the expression for $a_q(C_{l,\lambda})$ given in Theorem \ref{thm6},
we obtain the following result which generalizes the case $l=2$ treated in Theorem \ref{thm44}.
\begin{corollary}
Let $d=$ \emph{lcm}$(3, l)$. If $q\equiv 1$ $($\emph{mod} $d)$, then
\begin{align}\label{coro1}
-a_q(C_{l, \frac{1}{3}})=
\left\{
\begin{array}{lll}
2+2q\cdot\emph{Re}\displaystyle\left[{\chi_3 \choose \chi_3}+{\chi_3^2 \choose \chi_3}\right],
\hspace{3.27cm} \hbox{if $l=3$;}\\
2q\cdot \displaystyle\sum_{i=1}^{\frac{l-1}{2}}\emph{Re}
\left[S^i\left(\frac{27}{8}\right)\left\{{\chi_3 \choose S^i}+{\chi_3^2 \choose S^i}\right\}\right],
\hspace{1cm}\hbox{if $l$ is odd, $l>3$;} \\
2q\cdot \displaystyle\left[\phi(-2)\emph{Re}{\chi_3 \choose \phi}
+\displaystyle\sum_{i=1}^{\frac{l-2}{2}}\emph{Re}
\left\{S^i\left(\frac{27}{8}\right)
\left({\chi_3 \choose S^i}+{\chi_3^2 \choose S^i}\right)\right\}\right],\\ \hspace{8.5cm}  \hbox{if $l$ is even;}
                           \end{array}
                         \right.\notag
\end{align}
where $S$ and $\chi_3$ are characters on $\mathbb{F}_q$ of order $l$ and $3$ respectively.
\end{corollary}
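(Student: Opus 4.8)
The plan is to read everything off Theorem~\ref{thm6} and then collapse the resulting character sums into real parts by a symmetry argument. Since $d=\mathrm{lcm}(3,l)$, the hypothesis $q\equiv 1\pmod d$ forces simultaneously $q\equiv 1\pmod l$ (so that Theorem~\ref{thm6} is applicable) and $q\equiv 1\pmod 3$ (so that we always land in the nontrivial branches of that theorem). Thus for $l\neq 3$ the starting point is $-a_q(C_{l,\frac13})=q\sum_{i=1}^{l-1}S^i(\tfrac{27}{8})\left[{\chi_3\choose S^i}+{\chi_3^2\choose S^i}\right]$, while for $l=3$ it is $-a_q(C_{3,\frac13})=2+q\sum_{i=1}^{2}\left[{\chi_3\choose \chi_3^i}+{\chi_3^2\choose \chi_3^i}\right]$. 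In each case the task is purely to fold the full sum over $i$ down to a sum of real parts over half the index range.

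The key tool is the conjugation identity for the binomial coefficients. From \eqref{eq0}, together with the facts that complex conjugation replaces each character by its inverse (so $\overline{J(A,\overline B)}=J(\overline A,B)$) and that $\overline{B(-1)}=\overline B(-1)$, one obtains $\overline{{A\choose B}}={\overline A\choose \overline B}$. Applying this with $\overline{\chi_3}=\chi_3^2$ and $\overline{S^i}=S^{l-i}$ gives $\overline{{\chi_3\choose S^i}+{\chi_3^2\choose S^i}}={\chi_3\choose S^{l-i}}+{\chi_3^2\choose S^{l-i}}$; since moreover $S^{l-i}(\tfrac{27}{8})=\overline{S^i(\tfrac{27}{8})}$, the summand indexed by $l-i$ is exactly the complex conjugate of the summand indexed by $i$.

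Now carry out the pairing $i\leftrightarrow l-i$. When $l$ is odd the indices $i$ and $l-i$ never coincide, so the $l-1$ summands split into $\tfrac{l-1}{2}$ conjugate pairs, each pair contributing $2\,\mathrm{Re}[\cdots]$, which yields the stated formula for odd $l>3$ (the case $l=3$ is the same pairing, noting that the form of Theorem~\ref{thm6} there already carries no $\tfrac{27}{8}$ factor, together with the surviving constant $2$). When $l$ is even the fixed index $i=l/2$ must be isolated: there $S^{l/2}$ is the unique character of order two, namely $\phi$, so its summand is real and equals $2\phi(\tfrac{27}{8})\,\mathrm{Re}{\chi_3\choose\phi}$, while the remaining $l-2$ indices pair into $\tfrac{l-2}{2}$ conjugate pairs contributing $2\sum_{i=1}^{(l-2)/2}\mathrm{Re}\!\left[S^i(\tfrac{27}{8})\left({\chi_3\choose S^i}+{\chi_3^2\choose S^i}\right)\right]$.

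The one step needing care, and the main obstacle, is matching the even-$l$ middle term with the claimed $2\phi(-2)\,\mathrm{Re}{\chi_3\choose\phi}$. This amounts to simplifying $\phi(\tfrac{27}{8})$: because $\phi$ is quadratic, $\phi(\tfrac{27}{8})=\phi(3)\phi(2)$, and here one invokes $\phi(-3)=1$ — which holds precisely because $q\equiv 1\pmod 3$, exactly as in the proof of Theorem~\ref{thm44} — to rewrite $\phi(3)=\phi(-1)$ and conclude $\phi(\tfrac{27}{8})=\phi(-2)$. Collecting the three cases then gives the corollary.
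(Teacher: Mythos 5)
Your proposal is correct and is essentially the paper's own argument: the paper states this corollary as obtained by ``simplifying the expression for $a_q(C_{l,\lambda})$ given in Theorem \ref{thm6}'', and the intended simplification is exactly your conjugate-pairing $i\leftrightarrow l-i$ via $\overline{{A\choose B}}={\overline{A}\choose \overline{B}}$, with the middle index $i=l/2$ handled through $\phi\left(\frac{27}{8}\right)=\phi(-2)$ using $\phi(-3)=1$ for $q\equiv 1\pmod 3$, just as in the paper's proof of Theorem \ref{thm44}. Your write-up supplies the details the paper leaves implicit, and they all check out.
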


\section{Values of ${_{3}}F_2$ and ${_{2}}F_1$ Gaussian hypergeometric series}
In this section, we will give the proof of Theorem \ref{thm5} and Theorem \ref{thm8}. We now prove the following lemmas from which Theorem \ref{thm5} and Theorem \ref{thm8} follow directly.
\begin{lemma}\label{lemma2}
If $S$ is a character on $\mathbb{F}_q$ whose order is not equal to $1$, $3$ or $4$, then
\begin{align}
{_{3}}F_2\left(\begin{array}{cccc}
                S^{-3}, & S^{-1}, & S^{-2}\phi\\
                   & S^{-4}, & S^{-2}
              \end{array}\mid \frac{1+\lambda}{\lambda} \right)&
              =\frac{J(S^{-1}, S^{-1})}{q^2 S(-4\lambda^3) J(S^{-3}, S)}
              \left[\sum_{x\in\mathbb{F}_q}S((x-1)(x^2+\lambda))\right]^2\notag\\
              &-\frac{S^2(\frac{1+\lambda}{\lambda})}{q}\phi(-\lambda).\notag
\end{align}
\end{lemma}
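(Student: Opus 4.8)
The plan is to read the Lemma directly off the Evans--Greene evaluation in Theorem \ref{thm2} via a single, carefully chosen specialization of parameters, followed by routine character bookkeeping. Specifically, I would set $A=S^{-3}$ and $C=S^{-2}$ and take the argument $x=\frac{1+\lambda}{\lambda}$. With these choices the top row $A,\overline{A}C^2,C\phi$ becomes $S^{-3},S^{-1},S^{-2}\phi$ and the bottom row $C^2,C$ becomes $S^{-4},S^{-2}$, so the left-hand side of Theorem \ref{thm2} is exactly the ${_3}F_2$ appearing in the Lemma. Before invoking the theorem I would verify its hypotheses against the standing assumption that $S$ has order not equal to $1$, $3$, or $4$: the condition $C\neq\phi$ is $S^2\neq\phi$, i.e. $S$ does not have order $4$; the condition $A\notin\{\varepsilon,C,C^2\}$ reduces to $S\neq\varepsilon$ and $S^3\neq\varepsilon$, i.e. order neither $1$ nor $3$; and $x\neq1$ is automatic since $\lambda\neq0$.

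Next I would simplify the right-hand side. A direct computation gives $1-x=-\frac{1}{\lambda}$, and the two characters entering the function $g$ collapse to a single pair: $A\overline{C}^2=S^{-3}S^{4}=S$ and $\overline{A}C=S^{3}S^{-2}=S$, so that $g(A\overline{C}^2,\overline{A}C;1-x)=g(S,S;-\frac{1}{\lambda})$. Likewise $\overline{A}C^2=S^{-1}$ and $A\overline{C}=S^{-1}$, so the Jacobi-sum ratio $J(\overline{A}C^2,A\overline{C})/J(A,\overline{A}C)$ becomes $J(S^{-1},S^{-1})/J(S^{-3},S)$, already of the shape in the Lemma. The bridge to the point-count sum is identity \eqref{eq6} from the proof of Theorem \ref{thm4}, which at exponent $1$ reads $g(S,S;-\frac{1}{\lambda})=S^{-1}(-\lambda)\sum_{x\in\F_q}S((x-1)(x^2+\lambda))$; squaring gives $g(S,S;-\frac{1}{\lambda})^2=S^{-2}(-\lambda)\bigl[\sum_{x}S((x-1)(x^2+\lambda))\bigr]^2$, which supplies the bracketed square together with an extra factor $S^{-2}(-\lambda)$ to be absorbed into the coefficient.

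The one step requiring care is the consolidation of all the stray character factors into the single prefactor $1/S(-4\lambda^3)$. I would gather $C(-1)=S^{-2}(-1)$, $A\overline{C}(4)=S^{-1}(4)$, $A\overline{C}^2(1-x)=S(-\frac{1}{\lambda})=S(-1)S^{-1}(\lambda)$, and the $S^{-2}(-\lambda)=S^{-2}(-1)S^{-2}(\lambda)$ coming from the squared $g$, and multiply them together. The key simplification is that $S^{-2}(-1)=S(-1)^{-2}=1$, since any character takes value $\pm1$ at $-1$; this lets $S^{-3}(-1)$ collapse to $S^{-1}(-1)$. Collecting the surviving powers yields $S^{-1}(-1)S^{-1}(4)S^{-3}(\lambda)=S^{-1}(-4\lambda^3)=1/S(-4\lambda^3)$, as required. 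In parallel, the first summand of Theorem \ref{thm2}, namely $-\overline{C}(x)\phi(1-x)/q$, simplifies via $\overline{C}(x)=S^{2}(\frac{1+\lambda}{\lambda})$ and $\phi(1-x)=\phi(-\frac{1}{\lambda})=\phi(-\lambda)$ to exactly $-\frac{S^{2}(\frac{1+\lambda}{\lambda})}{q}\phi(-\lambda)$.

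Assembling the two pieces then gives the stated identity. I expect the only place where an error could creep in is the character accounting in the previous paragraph; everything else is a mechanical substitution into Theorem \ref{thm2} and an appeal to \eqref{eq6}, with no genuine analytic obstacle. It is worth recording the two small facts that drive the cancellations, namely $S^{-2}(-1)=1$ and $\phi(-\frac{1}{\lambda})=\phi(-\lambda)$, so that the bookkeeping is transparent.
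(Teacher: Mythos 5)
Your proposal is correct and follows essentially the same route as the paper's own proof: specializing Theorem \ref{thm2} at $A=S^{-3}$, $C=S^{-2}$, $x=\frac{1+\lambda}{\lambda}$, and then converting $g(S,S;-\frac{1}{\lambda})$ to the point-count sum via \eqref{eq6}. Your explicit verification of the hypotheses and the character bookkeeping (in particular $S^{-2}(-1)=1$, which turns $S^{-1}(-4\lambda)\cdot S^{-2}(\lambda)$ into $S^{-1}(-4\lambda^{3})$) is exactly the computation the paper leaves implicit.
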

\begin{proof}
Since $S$ is a character on $\mathbb{F}_q$ whose order is not equal to $1$, $3$ or $4$, so applying
Theorem \ref{thm2} directly for $A=S^{-3}$, $C=S^{-2}$ and $x=\frac{1+\lambda}{\lambda}$, we get
\begin{align}
{_{3}}F_2\left(\begin{array}{cccc}
                S^{-3}, & S^{-1}, & S^{-2}\phi\\
                   & S^{-4}, & S^{-2}
              \end{array}\mid \frac{1+\lambda}{\lambda} \right)
              &=\frac{J(S^{-1}, S^{-1})}{q^2 S(-4\lambda) J(S^{-3}, S)}g(S, S;-\frac{1}{\lambda})^2\notag\\
              &-\frac{S^2(\frac{1+\lambda}{\lambda})}{q}\phi(-\lambda),\notag
\end{align}
which yields the result because of
\begin{align}
g(S, S; -\frac{1}{\lambda})=\sum_{x\in \mathbb{F}_q}S^{-1}(-\lambda)S((x-1)(x^2+\lambda)).\notag
\end{align}
\end{proof}
\begin{lemma}\label{lemma3}
Let $S$ be any character on $\mathbb{F}_q$. For $\lambda=\frac{1}{3}$, we have
\begin{align}
\sum_{x\in\mathbb{F}_q}S((x-1)(x^2+\lambda))
=\left\{
\begin{array}{ll}
0, & \hbox{if $q\equiv 2$ $($\emph{mod} $3)$;} \\
qS(-\frac{8}{27})\displaystyle\left[{S \choose \chi_3}+
{S \choose \chi_3^2}\right], & \hbox{if $q\equiv 1$ $($\emph{mod} $3)$,}
\end{array}
\right.\notag
\end{align}
where $\chi_3$ is a character of order $3$ on $\mathbb{F}_q$.
\end{lemma}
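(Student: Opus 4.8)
The plan is to reduce the sum to one involving $1+x^3$ via the substitution already used in the proof of Theorem~\ref{thm6}, then expand using the finite-field binomial theorem and apply orthogonality. First I would set $\lambda=\frac13$ in \eqref{eq1} and perform the change of variables $(x,y)\mapsto(\frac{x}{9}+\frac13,y)$ followed by replacing $-\frac{x}{6}$ by $x$; since $q$ is a power of an odd prime with $p\neq 3$, both maps are bijections of $\mathbb{F}_q$, and they convert the factor $(x-1)(x^2+\frac13)$ into $-\frac{8}{27}(1+x^3)$. Because $S$ is multiplicative, the sum then factors as
\begin{align}
\sum_{x\in\mathbb{F}_q}S\!\left((x-1)(x^2+\tfrac13)\right)=S\!\left(-\tfrac{8}{27}\right)\sum_{x\in\mathbb{F}_q}S(1+x^3),\notag
\end{align}
so the whole problem is reduced to evaluating $\sum_{x}S(1+x^3)$.

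Next I would apply the binomial theorem \eqref{eq121} with character $S$ and argument $x^3$, obtaining $S(1+x^3)=\delta(x^3)+\frac{q}{q-1}\sum_\chi{S\choose\chi}\chi(x^3)$. Summing over $x$, the $\delta$-term contributes $1$ (from $x=0$), and in the main term I would write $\chi(x^3)=\chi^3(x)$ and interchange the two summations. Orthogonality \eqref{lemma11} then makes $\sum_{x}\chi^3(x)$ vanish unless $\chi^3=\varepsilon$, leaving only the characters of order dividing $3$: this is just $\varepsilon$ when $q\equiv2\pmod3$, and $\varepsilon,\chi_3,\chi_3^2$ when $q\equiv1\pmod3$, which is exactly the case distinction appearing in the statement.

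Finally, in each case I would evaluate the surviving binomial coefficients. Taking $S\neq\varepsilon$ (as holds in all the applications to Theorem~\ref{thm5} and Theorem~\ref{thm8}), \eqref{eq01} gives ${S\choose\varepsilon}=-\frac1q$, so its contribution $q{S\choose\varepsilon}=-1$ cancels the $1$ coming from the $\delta$-term. Hence $\sum_{x}S(1+x^3)=0$ when $q\equiv2\pmod3$, while for $q\equiv1\pmod3$ one is left with $q\bigl[{S\choose\chi_3}+{S\choose\chi_3^2}\bigr]$; multiplying back by $S(-\frac{8}{27})$ yields the asserted formula. This argument is in effect the single-$S$ specialization of the computation already carried out within the proof of Theorem~\ref{thm6}, so I do not expect any serious obstacle. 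The only points needing care are verifying that the stated substitution really produces $-\frac{8}{27}(1+x^3)$ and tracking the exact cancellation between the $\delta$-term and the $\varepsilon$-contribution.
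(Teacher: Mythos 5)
Your proof is correct and follows essentially the same route as the paper's: the same change of variables reducing $(x-1)(x^2+\frac{1}{3})$ to $-\frac{8}{27}(1+x^3)$, followed by the binomial theorem \eqref{eq121} and orthogonality \eqref{lemma11}, with the surviving $\varepsilon$-term cancelling the $\delta$-contribution. Your explicit restriction to $S\neq\varepsilon$ is a point of care the paper glosses over, and it is genuinely needed: for $S=\varepsilon$ the left-hand side equals $q-1$ (resp.\ $q-3$) when $q\equiv 2$ (resp.\ $1$) $\pmod{3}$, so the lemma's ``any character'' must be read as ``any nontrivial character,'' which is what holds in all of its applications.
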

\begin{proof}
As shown in the proof of Theorem \ref{thm6}, $$y^l=(x-1)(x^2+\frac{1}{3})$$
is equivalent to $$y^l=-\frac{8}{27}(1+x^3).$$
Hence using \eqref{eq121}, we have
\begin{align}
\sum_{x\in\mathbb{F}_q}S((x-1)(x^2+\frac{1}{3}))&=\sum_{x\in\mathbb{F}_q}S(-\frac{8}{27})S(1+x^3)\notag\\
&=S(-\frac{8}{27}) + \frac{q}{q-1}S(-\frac{8}{27})\sum_{x\in\mathbb{F}_q}\sum_{\chi}{S \choose \chi}\chi^3(x).\notag
\end{align}
Following the proof of Theorem \ref{thm6}, the result follows easily.
\end{proof}
\begin{lemma}\label{lemma4}
If $S$ is square of some character on $\mathbb{F}_q$ and $S$ is not of order $3$, then
\begin{align}
\sum_{x\in\mathbb{F}_q}S((x-1)(x^2+\lambda))
=\frac{qJ(\phi, S)}{J(\sqrt{S^{-1}}, \sqrt{S^3}\phi)}\cdot {_{2}}F_1\left(\begin{array}{cccc}
                \sqrt{S^{-3}}\phi, & \sqrt{S^{-3}}\\
                 & S^{-2}
              \end{array}\mid 1+\lambda \right)\notag
\end{align}
\end{lemma}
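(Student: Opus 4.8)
The plan is to transform the character sum $\sum_{x\in\mathbb{F}_q}S((x-1)(x^2+\lambda))$ into the quantity $g\left(S,S;-\frac{1}{\lambda}\right)$ and then to push it through Theorem \ref{thm1} and Theorem \ref{thm3} in succession, exactly as was done for the general power $S^i$ in the proof of Theorem \ref{thm22}. All of the hypergeometric machinery needed is already in place, so the real content of the argument is careful bookkeeping of the multiplicative characters, and in particular of the various square-root characters that appear.

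First I would record the identity
$$\sum_{x\in\mathbb{F}_q}S((x-1)(x^2+\lambda))=S(-\lambda)\,g\left(S,S;-\frac{1}{\lambda}\right),$$
which is \eqref{eq6} specialized to $i=1$ and follows immediately from the definition \eqref{eq5} of $g$. This reduces the problem to evaluating $g\left(S,S;-\frac{1}{\lambda}\right)$.

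Next I would apply Theorem \ref{thm1} with $A=S^{-3}$, $C=S^{-2}$ and $x=\frac{1+\lambda}{\lambda}$. A short check gives $A\overline{C}^2=\overline{A}C=S$, together with $\frac{x}{x-1}=1+\lambda$ and $1-x=-\frac{1}{\lambda}$, while the hypothesis $A\neq C$ is simply $S\neq\varepsilon$; this produces $g\left(S,S;-\frac{1}{\lambda}\right)=qS\left(-\frac{8}{\lambda}\right)F^*(S^{-3},S^{-2};1+\lambda)$, which is \eqref{eq116} at $i=1$. Since $S$ is assumed to be a square, $R:=\sqrt{S^{-3}}$ is an honest character with $R^2=S^{-3}$, and the standing hypothesis $R^2\notin\{\varepsilon,S^{-2},S^{-4}\}$ of Theorem \ref{thm3} amounts precisely to $S\neq\varepsilon$ together with $S$ not being of order $3$. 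Computing $C\overline{R}^2=S$ and $\overline{R}C=\sqrt{S^{-1}}$, Theorem \ref{thm3} then yields
$$F^*(S^{-3},S^{-2};1+\lambda)=\sqrt{S^{-3}}(4)\,\frac{J(\phi,S)}{J(\sqrt{S^{-1}},\sqrt{S^3}\phi)}\cdot{_{2}}F_1\left(\begin{array}{cc}\sqrt{S^{-3}}\phi,&\sqrt{S^{-3}}\\&S^{-2}\end{array}\mid 1+\lambda\right).$$

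Finally I would assemble the three displays and collapse the scalar characters. Their product is
$$S(-\lambda)\cdot S\left(-\frac{8}{\lambda}\right)\cdot\sqrt{S^{-3}}(4)=S(8)\,S^{-3}(2)=S^{3}(2)\,S^{-3}(2)=1,$$
using $\sqrt{S^{-3}}(4)=S^{-3}(2)$ and $S(8)=S^{3}(2)$; this total cancellation is exactly what leaves the clean formula asserted in the lemma. I expect the only delicate point to be this character arithmetic, namely correctly tracking the square-root characters $\sqrt{S^{-3}}$, $\sqrt{S^{-1}}$ and $\sqrt{S^3}$ inside the Jacobi sums and verifying that the order restrictions on $S$ are precisely those required to license Theorems \ref{thm1} and \ref{thm3}; everything else is direct substitution.
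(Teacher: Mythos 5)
Your proposal is correct and follows essentially the same route as the paper's own proof: reduce the sum to $S(-\lambda)\,g\left(S,S;-\frac{1}{\lambda}\right)$, apply Theorem \ref{thm1} with $A=S^{-3}$, $C=S^{-2}$, $x=\frac{1+\lambda}{\lambda}$ to reach $qS^3(2)F^*(S^{-3},S^{-2};1+\lambda)$, and then apply Theorem \ref{thm3} with $R=\sqrt{S^{-3}}$. Your explicit bookkeeping (the cancellation $S(-\lambda)S\left(-\frac{8}{\lambda}\right)\sqrt{S^{-3}}(4)=1$ and the observation that the hypotheses of the two theorems amount to $S\neq\varepsilon$ and $S$ not of order $3$) simply spells out details the paper leaves implicit.
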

\begin{proof}
We have
\begin{align}
\sum_{x\in \mathbb{F}_q}S((x-1)(x^2+\lambda))=S(-\lambda)g(S, S; -\frac{1}{\lambda}).\notag
\end{align}
Since $S$ is a square of some character of $\mathbb{F}_q$, applying Theorem \ref{thm1}, we obtain
\begin{equation}\label{eq118}
\sum_{x\in \mathbb{F}_q}S((x-1)(x^2+\lambda))=qS^3(2)F^*(S^{-3}, S^{-2}; 1+\lambda).
\end{equation}
Also $S$ is not of order $3$. Using Theorem \ref{thm3} we complete the proof.
\end{proof}
Following the proof of Lemma \ref{lemma4} and applying Proposition \ref{propo1} in \eqref{eq118},
we have the following result.
\begin{lemma}
If $S$ is a character of order $3$ on $\mathbb{F}_q$, then
\begin{align}
\sum_{x\in\mathbb{F}_q}S((x-1)(x^2+\lambda))=q\cdot{_{2}}F_1\left(\begin{array}{cccc}
                \phi, & \varepsilon\\
                 & S
              \end{array}\mid x \right).
\end{align}
\end{lemma}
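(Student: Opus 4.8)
The plan is to obtain this lemma as the order-$3$ specialization of the calculation already performed in the proof of Lemma~\ref{lemma4}. The essential input there is the identity \eqref{eq118}, namely $\sum_{x\in\mathbb{F}_q}S((x-1)(x^2+\lambda))=qS^3(2)F^*(S^{-3},S^{-2};1+\lambda)$, and my first task would be to confirm that \eqref{eq118} remains valid for a character $S$ of order $3$. Its derivation uses only the identity $\sum_{x}S((x-1)(x^2+\lambda))=S(-\lambda)\,g(S,S;-\tfrac{1}{\lambda})$ together with Theorem~\ref{thm1} applied to $A=S^{-3}$ and $C=S^{-2}$, and the hypotheses of Theorem~\ref{thm1} are merely $A\neq C$ and $\tfrac{1+\lambda}{\lambda}\notin\{0,1\}$. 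Since $S\neq\varepsilon$ forces $S^{-3}\neq S^{-2}$, and since $\lambda\in\mathbb{Q}\setminus\{0,-1\}$ guarantees the argument condition, no squareness assumption on $S$ is needed here, so \eqref{eq118} holds verbatim.

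Next I would substitute the order-$3$ relations directly. From $S^3=\varepsilon$ we read off $S^3(2)=\varepsilon(2)=1$, $S^{-3}=\varepsilon$, and $S^{-2}=S^{-2}S^3=S$. Feeding these into \eqref{eq118} collapses its right-hand side to $qF^*(\varepsilon,S;1+\lambda)$.

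Finally I would apply Proposition~\ref{propo1} with $C=S$. Because $S$ has order $3$ we have $S\neq\varepsilon$, so the relevant branch is the first case $C\neq\varepsilon$, which yields $F^*(\varepsilon,S;1+\lambda)={_2}F_1\!\left(\phi,\varepsilon;S\mid 1+\lambda\right)$. Combining this with the previous step gives the asserted evaluation; note that the argument produced by this computation is $1+\lambda$, so the symbol ``$x$'' appearing in the displayed statement should be read as $1+\lambda$, consistently with Lemma~\ref{lemma4}.

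The step that needs genuine care, rather than routine manipulation, is the behaviour at the boundary $S^{-3}=\varepsilon$. In Lemma~\ref{lemma4} the transition from $F^*$ to ${_2}F_1$ passed through Theorem~\ref{thm3}, whose standing hypothesis $R^2\notin\{\varepsilon,C,C^2\}$ forces $S^{-3}\neq\varepsilon$ and therefore excludes precisely the order-$3$ situation. The whole point of the present lemma is that this excluded case is instead recovered from Proposition~\ref{propo1}, whose two-case formulation was designed exactly to evaluate $F^*(\varepsilon,C;x)$. Thus the only real obstacle is to ensure that the order-$3$ hypothesis channels the argument through Proposition~\ref{propo1} rather than Theorem~\ref{thm3}; once that routing is made, the remaining work is just the elementary character identities $S^3=\varepsilon$ and $\varepsilon(2)=1$.
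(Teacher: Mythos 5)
Your proof is correct and takes essentially the same route as the paper, whose own (one-line) argument is precisely to follow the proof of Lemma \ref{lemma4} up to \eqref{eq118} and then apply Proposition \ref{propo1} (with $C=S\neq\varepsilon$) in place of Theorem \ref{thm3}, which is barred when $S^{-3}=\varepsilon$. Your added observations---that \eqref{eq118} requires no squareness hypothesis since Theorem \ref{thm1} only needs $S^{-3}\neq S^{-2}$ and $\frac{1+\lambda}{\lambda}\notin\{0,1\}$, and that the argument ``$x$'' in the displayed statement should be read as $1+\lambda$---are both accurate.
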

\begin{pf}{\bf \ref{thm5}.}
Putting $\lambda=\frac{1}{3}$ in Lemma \ref{lemma2} and then combining it with Lemma \ref{lemma3} we complete the proof.
\end{pf}
\begin{pf}{\bf \ref{thm8}.}
(i) Putting $\lambda=\frac{1}{3}$ in Lemma \ref{lemma4} and then using Lemma \ref{lemma3}, we complete the proof.\\

(ii) Taking $x=\frac{4}{3}$ in Theorem \ref{thm7} (i), we obtain
\begin{align}
{_{2}}F_1\left(\begin{array}{cccc}
                \sqrt{S^{-3}}\phi, & \sqrt{S^{-3}}\\
                 & S^{-1}\phi
              \end{array}\mid -\frac{1}{3} \right)=\sqrt{S}\phi(-1){_{2}}F_1\left(\begin{array}{cccc}
                \sqrt{S^{-3}}\phi, & \sqrt{S^{-3}}\\
                 & S^{-2}
              \end{array}\mid \frac{4}{3} \right).\notag
\end{align}
Now using (i), we complete the proof.\\

(iii) Applying Theorem \ref{thm7} (ii) for $x=\frac{4}{3}$, we have
\begin{align}
{_{2}}F_1\left(\begin{array}{cccc}
                \sqrt{S^{-3}}\phi, & \sqrt{S^{-1}}\\
                 & S^{-2}
              \end{array}\mid 4 \right)=\sqrt{S^3}\phi(-3){_{2}}F_1\left(\begin{array}{cccc}
                \sqrt{S^{-3}}\phi, & \sqrt{S^{-3}}\\
                 & S^{-2}
              \end{array}\mid \frac{4}{3} \right)\notag
\end{align}
and the result follows from (i).\\

(iv) Using Theorem \ref{thm7} (ii) for $x=-\frac{1}{3}$, we find that
\begin{align}
{_{2}}F_1\left(\begin{array}{cccc}
                \sqrt{S^{-3}}\phi, & \sqrt{S}\phi\\
                 & S^{-1}\phi
              \end{array}\mid \frac{1}{4} \right)=\phi(-1)\sqrt{S^3}\phi(\frac{3}{4}){_{2}}F_1\left(\begin{array}{cccc}
                \sqrt{S^{-3}}\phi, & \sqrt{S^{-3}}\\
                 & S^{-1}\phi
              \end{array}\mid -\frac{1}{3} \right)\notag
\end{align}
and then proof follows from the proof of (ii).
\end{pf}

\section*{Acknowledgment}
We thank Ken Ono for many helpful suggestions during the preparation of the article. 
The second author acknowledges the financial support of Department of Science and Technology, 
Government of India for supporting a part of this work under INSPIRE Fellowship.

\end{document}